\documentclass[12pt]{amsart} 
\usepackage{amssymb} 
\usepackage{amsmath, amscd} 
\usepackage{amsthm}
\usepackage{amsmath} 
\usepackage{comment} 
\usepackage[table,xcdraw]{xcolor} 
\usepackage{ulem} 
\usepackage{tikz}
\usepackage{float} 
\usepackage{graphicx} 
\usepackage{circuitikz} 
\usepackage[colorlinks=true, linkcolor=blue,urlcolor=blue]{hyperref}

\newtheorem{theorem}{Theorem}[section] 
 
\newtheorem{proposition}[theorem]{Proposition} \newtheorem{lemma}[theorem]{Lemma}
 
 \newtheorem{corollary}[theorem]{Corollary} \theoremstyle{definition}
 
\newtheorem{example}[theorem]{Example} \newtheorem{definition}[theorem]{Definition}

\date\today

\begin{document}

\author[Z. Vesali Mahmood]{Zari Vesali Mahmood} 
\address{Department of Mathematics, Tarbiat
Modares University, 14115-111 Tehran Jalal AleAhmad Nasr, Iran} 
\email{v.zarivesali@modares.ac.ir}

\author[A. Moussavi]{Ahmad Moussavi} 
\address{Department of Mathematics, Tarbiat Modares University, 14115-111
Tehran Jalal AleAhmad Nasr, Iran} 
\email{moussavi.a@modares.ac.ir; moussavi.a@gmail.com}	

\author[P. Danchev]{Peter Danchev}
\address{Institute of Mathematics and Informatics, Bulgarian Academy of Sciences, 1113 Sofia, Bulgaria}
\email{danchev@math.bas.bg; pvdanchev@yahoo.com}

\title[weakly $\sqrt{J}U$ rings]{Weakly $\sqrt{J}U$ Rings}
\keywords{$\sqrt{J(R)}$, $\sqrt{J}U$ ring, $W\sqrt{J}U$ ring, regular ring, unit-regular ring, matrix ring, group ring}
\subjclass[2010]{16S34, 16U60, 20C07}

\maketitle

\begin{abstract} We introduce and study the so-called {\it weakly $\sqrt{J}U$ rings} (hereafter abbreviated as {\it $W\sqrt{J}U$ rings} for short), in which every unit is of the form $j+1$ or $j-1$ for some $j$ in $\sqrt{J(R)} : = \{x \in R : x^n \in J(R) \text{ for some } n\ge 1\}$. This class of rings non-trivially generalizes the classes of $\sqrt{J}U$, $UU$, $JU$, $WUU$ and $WJU$ rings, respectively. We investigate their basic properties showing that they are Dedekind-finite, that $M_n(R)$ is never $W\sqrt{J}U$ for $n\ge 2$, and that when $\operatorname{char}(R)>0$ it must be equal to $2^\alpha 3^\beta$ for some $\alpha, \beta \in \mathbb{N}  \cup \left\{ 0 \right\}$. Moreover, for group rings $RG$, we prove that if $RG$ is $W\sqrt{J}U$, then $R$ is $W\sqrt{J}U$ and $G$ is a torsion group. In addition, when $R$ has positive characteristic and $G$ is a locally finite $p$-group, we give a complete characterization like this: $RG$ is a $W\sqrt{J}U$ ring if, and only if, either $R$ is a $\sqrt{J}U$ ring and $G$ is a $2$-group, or $R$ is a $W\sqrt{J}U$ ring with $3\in J(R)$ and $G$ is a $3$-group, or $R\cong R_1\times R_2$ with $R_1$ a $\sqrt{J}U$ ring, $R_2$ a $W\sqrt{J}U$ ring and $G$ a trivial group. 	

Our results substantially improve on recent achievements due to Saini and Udar in Czech. Math. J. (2025).
\end{abstract}

\section{Introduction and Major Concepts}

Throughout this paper, \( R \) denotes an associative ring with identity, which is {\it not} necessarily commutative. We employ the standard notations for \( U(R) \) to be the group of units, \(\operatorname{Nil}(R) \) to be the set of nilpotent elements, \( C(R) \) to be the center, \( \operatorname{Id}(R) \) to be the set of idempotents, and \( J(R) \) to be the Jacobson radical. Likewise, for any positive integer \( n \), the ring of \( n \times n \) matrices over \( R \) is denoted as \( {\rm M}_{n}(R) \), and the ring of \( n \times n \) upper triangular matrices as \( {\rm T}_{n}(R) \). Also, a ring is called \textit{abelian} if all its idempotents are central, i.e., \( \operatorname{Id}(R) \subseteq C(R) \).

In regard to \cite{8}, a ring \( R \) is said to satisfy the \textit{exchange property} if, for every \( a \in R \), there exists an idempotent \( e \in aR \) such that \( 1-e \in (1-a)R \). Besides, in \cite{8} is introduced the class of \textit{clean rings} as those rings in which each element can be written as the sum of a unit and an idempotent. It is well known that every clean ring is exchange, but the converse does {\it not} hold in general; however, for abelian rings it is true. The stronger notion of \textit{strongly clean rings} as defined in \cite{Nic2} requires the unit and the idempotent to commute one another. It is principally known that strongly clean rings properly generalize strongly \(\pi\)-regular rings and have been extensively studied over the past two decades by many authors.

Furthermore, Diesl \cite{diesl} introduced {\it (strongly) nil-clean rings} as those rings in which each element is a sum (with commuting terms) of a nilpotent and an idempotent. As a natural extension, Danchev and McGovern \cite{DMc} defined \textit{weakly nil-clean rings} in the commutative case as those rings in which every element is either a sum or a difference of a nilpotent and an idempotent. The authors gave a complete characterization of such rings and their group rings. Further, Breaz et al. \cite{BDZ} extended the definition to arbitrary rings and obtained a decomposition theorem for abelian weakly nil-clean rings. They also proved that, for a division ring \( D \) and \( n \ge 1 \), \( M_{n}(D) \) is weakly nil-clean if, and only if, either \( D \cong \mathbb{Z}_2 \) or \( D \cong \mathbb{Z}_3 \) with \( n=1 \).

As a further common generalization to the abelian case, Ko\c{s}an and Zhou \cite{KZ} defined \textit{strongly weakly nil-clean rings}, where the nilpotent and the idempotent are required to commute, and extended the structure theory to this wider setting. However, the full description of weakly nil-clean rings in all generality was given in \cite{wuu}.

Some classical concepts needed for our next presentation are these: Recall that a ring is \textit{Boolean} if every element is an idempotent. A ring is \textit{regular} (resp., \textit{unit-regular}) in the sense of von Neumann if, for every \( a \in R \), there exists \( x \in R \) (resp., \( x \in U(R) \)) such that \( axa = a \); it is also known to be \textit{strongly regular} if \( a \in a^{2}R \) for all \( a \in R\). A ring is \textit{semi-regular} if \( R/J(R) \) is regular and idempotents lift modulo \( J(R) \). While semi-regular rings are exchange, the converse is {\it not} valid in general (see \cite{8}).

On the other hand, mimicking Ko\c{s}an et al. \cite{14}, a ring is called \textit{J-clean} if every element is a sum of an idempotent and an element of \( J(R) \) or, equivalently, if \( R/J(R) \) is Boolean and idempotents lift modulo \(J(R) \) (such rings are also called \textit{semi-Boolean} in \cite{25}). Generally, a ring is \textit{weakly semi-Boolean} provided that \( R = J(R) \pm \operatorname{Id}(R) \); these rings may also be termed as \textit{weakly J-clean}. Any semi-Boolean ring is weakly semi-Boolean, and every weakly semi-Boolean ring is clean, but neither of these two implications is reversible (see \cite{danchev1}).

In another vein, a ring \( R \) is named a \textit{UU-ring} if \( U(R) = 1 + \operatorname{Nil}(R) \) (compare with \cite{12}). Later on, Danchev \cite{wuu} introduced the larger class of \textit{weakly UU-rings} (or just {\it WUU-rings} for short) in which $U(R)={\rm Nil}(R)\pm 1$ and proved, among other results, that an exchange ring is WUU if, and only if, it is clean WUU if, and only if, it is strongly weakly nil-clean if, and only
if, \( J(R) \) is nil and \( R/J(R) \) is isomorphic to either a Boolean ring \( B \), or to \( \mathbb{Z}_3 \), or to \( B \times \mathbb{Z}_3 \). He also showed that \( {\rm M}_{n}(R) \) is never a WUU ring for any \( n \ge 2 \).

Since \( \pm 1 + J(R) \subseteq U(R) \) is always fulfilled, it is quite natural to consider rings for which equality holds: thereby, a ring is called a \textit{JU-ring} (or \textit{UJ-ring} in \cite{14}) provided that \( U(R) = 1 + J(R) \) (see \cite{D}). Furthermore, Danchev \cite{wuj} logically generalized this notion to
\textit{weakly JU-rings} (or just {\it WJU-rings} for short) and characterized exchange (resp., clean) WJU-rings in terms of lifting idempotents and the structure of \( R/J(R) \).

In other direction, the set \[ \sqrt{J(R)} = \{ x \in R : x^{n} \in J(R) \text{ for some } n \ge 1 \} \] was
introduced in \cite{wang}; it strictly contains \( J(R) \), but need {\it not} be a subring however. Evidently, \( \operatorname{Nil}(R) \subseteq \sqrt{J(R)} \).

Recently, Saini and Udar \cite{SU} defined a ring to be a \textit{\(\sqrt{J}U\) ring} provided that \( U(R) = 1 + \sqrt{J(R)} \). This class properly contains both UU-rings and JU-rings. They proved that this property passes to corner rings and that \( M_{n}(R) \) is \(\sqrt{J}U\) only when \( n=1 \) and \( R \) itself
is \(\sqrt{J}U\); moreover, \(\sqrt{J}U\) rings are Dedekind-finite.

Independently, Danchev et al. \cite{DDE} studied the same class under the name \textit{UJ\(^\#\) rings} (with \( \sqrt{J(R)} = J^{\#}(R) \)) and investigated their various extensions, including even group rings, in detail.

In the present article, we define and explore the class of \textit{weakly \(\sqrt{J}U\) rings} (abbreviated hereafter as \textit{\(W\sqrt{J}U\) rings} for convenience) via the condition \[ U(R) = \pm 1 + \sqrt{J(R)}. \] Thus, every unit can be written either as \( j+1 \) or as \( j-1 \) for some \( j \in \sqrt{J(R)} \). This notion absolutely expands all of the notions of \(\sqrt{J}U\), UU, JU, WUU and WJU rings, respectively.

We also consider skew polynomial extensions. In fact, for a ring endomorphism \( \alpha : R \to R \), let \( R[x;\alpha] \) designate the {\it ring of skew polynomials} with multiplication given by \( xr = \alpha(r)x \). When \( \alpha = \operatorname{id}_R \), we obtain the ordinary {\it polynomial ring} \( R[x] \). Similarly, \( R[[x;\alpha]] \) designates the {\it ring of skew formal power series} with \( R[[x]] = R[[x;\operatorname{id}_R]] \) being the standard {\it formal power series ring}.

To summarize our current work, in section 2, we introduce weakly \(\sqrt{J}U\) rings and establish their key properties. We prove several basic preliminaries concerning the set \(\sqrt{J(R)}\), as well as we investigate the behavior of the \(W\sqrt{J}U\) property under taking homomorphisms, direct products, corner rings, and matrix rings, and characterize \(W\sqrt{J}U\) rings in terms of their Jacobson radical and unit group. We also examine the connections between \(W\sqrt{J}U\) rings and other classical classes of rings such as clean, \(\pi\)-regular and weakly Boolean rings, respectively.

The subsequent section 3 is devoted to the study of group rings. We provide necessary conditions for a group ring \(RG\) to be \(W\sqrt{J}U\) establishing that then \(R\) must be \(W\sqrt{J}U\) and \(G\) must be a torsion group. Under some additional assumptions on the characteristic of \(R\) or on the group \(G\), we obtain sharper results concluding that \(G\) is either a \(2\)-group or a \(3\)-group in certain cases. The main statement of this section gives a total characterization when a group ring \(RG\) with \(R\) of positive characteristic and \(G\) a locally finite \(p\)-group is weakly $\sqrt{J}U$.

\section{Properties of Weakly $\sqrt{J}U$ Rings}

The aim of this section is to investigate general structure properties of weakly $\sqrt{J}U$ rings and apply them furthermore in the stated structural results. We also examine the transversal between weakly $\sqrt{J}U$ rings and other well-known ring classes such as clean, \(\pi\)-regular, weakly Boolean rings, etc.

\medskip

Our first preliminary technicality is the following.

\begin{lemma}\label{1.2}\cite[Lemma 2.1]{DDE}
Let $R$ be any ring. Then, the following assertions hold:

(1) If $a \in \sqrt{J(R)}$ and $b \in R$ with $ab = ba$, then $ab \in \sqrt{J(R)}$. 	

(2) For every $a \in R$ and $n \in \mathbb{N}$, we have $a^n \in \sqrt{J(R)}$ if and only if $a \in \sqrt{J(R)}$.

(3) If $a \in \sqrt{J(R)}$, then $1 - a \in U(R)$. 	

(4) If $a \in \sqrt{J(R)} \cap C(R)$, then $a \in J(R)$.

(5) For every ideal $I \subseteq J(R)$, we have $\sqrt{J(R/I)} = \sqrt{J(R)}/I$.

(6) If $R = \prod R_i$, then $\sqrt{J(R)} = \prod \sqrt{J(R_i)}$.

(7) If $a, b\in R$ and $ab\in \sqrt{J(R)}$, then $ba\in \sqrt{J(R)}$.
	
(8) $Nil(R) + J(R) \subseteq \sqrt{J(R)}$.
\end{lemma}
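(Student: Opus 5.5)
Each assertion follows from the definition of $\sqrt{J(R)}$ together with standard facts about $J(R)$. I will go through them in the order (2), (1), (3), (4), (7), (5), (6), (8).

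For (2), suppose $a^n\in\sqrt{J(R)}$. Then $(a^n)^m\in J(R)$ for some $m$, so $a^{nm}\in J(R)$ and $a\in\sqrt{J(R)}$. Conversely, if $a^k\in J(R)$, then $(a^n)^k=(a^k)^n\in J(R)$, since $J(R)$ is an ideal.

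For (1), since $ab=ba$ we have $(ab)^n=a^nb^n$. If $a^n\in J(R)$, this lies in $J(R)$ because $J(R)$ is a two-sided ideal.

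For (3), take $a^n\in J(R)$. Then $1-a^n$ is a unit, and
\[
1-a^n=(1-a)(1+a+\cdots+a^{n-1})=(1+a+\cdots+a^{n-1})(1-a),
\]
so $1-a$ has a right and a left inverse and is therefore a unit.

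For (4), let $a$ be central with $a^n\in J(R)$. For any $r\in R$, the element $ra$ commutes with $a$ (since $a$ is central), so $ra\in\sqrt{J(R)}$ by (1), and $1-ra\in U(R)$ by (3). Because this holds for every $r$, the standard characterization of the Jacobson radical gives $a\in J(R)$.

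For (7), suppose $(ab)^n\in J(R)$. Then
\[
(ba)^{n+1}=b(ab)^na\in J(R).
\]

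For (5), recall that $J(R/I)=J(R)/I$ when $I\subseteq J(R)$. Then $(x+I)^n\in J(R)/I$ if and only if $x^n\in J(R)$, which gives the equality of the two sets.

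For (6), use $J(\prod R_i)=\prod J(R_i)$. One inclusion is immediate: if $x^n\in J(R)$, then each coordinate satisfies $x_i^n\in J(R_i)$. The reverse inclusion is the only delicate point. If $x_i^{n_i}\in J(R_i)$ with unbounded $n_i$, then no single exponent works, and the equality can fail for infinite products. So I would either assume the product is finite and take $n=\max n_i$ (then $x_i^{n}=x_i^{n_i}x_i^{n-n_i}\in J(R_i)$ for each $i$), or read the statement as requiring a uniform exponent. I would adopt the finite-product reading, matching the source \cite{DDE}.

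For (8), write $x=u+j$ with $u^n=0$ and $j\in J(R)$. Pass to $\bar R=R/J(R)$. There $\bar x=\bar u$ is nilpotent, so $\bar x^n=0$, that is, $x^n\in J(R)$.

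The main obstacle is the uniform-exponent issue in (6). The remaining items are routine, with (4) relying only on the characterization of $J(R)$ through the units $1-ra$.
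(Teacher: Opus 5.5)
The paper gives no proof of this lemma; it is quoted from \cite[Lemma 2.1]{DDE}, so there is no in-paper argument to compare with. Your item-by-item verification is correct and is the natural direct route:
(1), (2) and (7) are exponent manipulations inside the two-sided ideal $J(R)$;
(3) is the commuting factorization $1-a^n=(1-a)(1+a+\cdots+a^{n-1})$;
(5) and (8) reduce to computations in $R/I$ and $R/J(R)$;
(4) uses the characterization of $J(R)$ through the units $1-ra$.

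There is one small slip in (4). Item (1) requires that $a$ commutes with $r$, which holds because $a$ is central; it does not require that $ra$ commutes with $a$. More simply, $(ra)^n=r^na^n\in J(R)$.

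Your caution about (6) is justified, and the failure is real rather than hypothetical. The inclusion $\sqrt{J(\prod R_i)}\subseteq\prod\sqrt{J(R_i)}$ holds for arbitrary products, but the reverse inclusion fails for infinite ones. Take a field $F$ and $R_k={\rm M}_k(F)$ for $k\ge 1$. Then $J(R_k)=0$, so $\sqrt{J(R_k)}$ is the set of nilpotent matrices. Put $N_k=\sum_{i=1}^{k-1}e_{i,i+1}$ and $x=(N_k)_k$, which lies in $\prod_k\sqrt{J(R_k)}$. Yet $x^n\neq 0$ for every $n$, since $N_{n+1}^{\,n}=e_{1,n+1}\neq 0$. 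Since $J(\prod_k R_k)=\prod_k J(R_k)=0$, it follows that $x\notin\sqrt{J(\prod_k R_k)}$.

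So (6) must be read for finite products, or for families admitting a uniform exponent, as you propose. This matters beyond the lemma. Proposition~\ref{productt} and Corollary~\ref{pro1} are stated for arbitrary index sets, and there only the first inclusion is available without further argument.
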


It is evident that, for a commutative ring \( R \), $\sqrt{J(R)}$ coincides with the Jacobson radical \( J(R)
\).

\begin{lemma}\label{2.1} If \(f: R \rightarrow S\) is a surjective ring homomorphism, then $f(\sqrt{J(R)})
\subseteq \sqrt{J(S)}$.
\end{lemma}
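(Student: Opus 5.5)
The plan is to reduce the statement to the well-known fact that a surjective ring homomorphism maps the Jacobson radical into the Jacobson radical, $f(J(R)) \subseteq J(S)$. Once this is available, the claim is immediate. Take $x \in \sqrt{J(R)}$ and choose $n\ge 1$ with $x^n \in J(R)$. Since $f$ is multiplicative, $f(x)^n = f(x^n) \in f(J(R))\subseteq J(S)$, which is exactly the statement $f(x)\in\sqrt{J(S)}$.

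So the only real step is $f(J(R))\subseteq J(S)$, and this is where surjectivity is used. I would prove it through the characterization of $J$ by quasi-regularity: $a\in J(R)$ if and only if $1-ra$ is a unit for every $r\in R$. Let $a\in J(R)$ and $s\in S$. By surjectivity, $s=f(r)$ for some $r\in R$. Then $1-ra\in U(R)$. Since $f$ is a unital homomorphism onto $S$, it sends units to units, so
\[ f(1-ra)=1-sf(a)\in U(S). \]
As $s$ was arbitrary, $f(a)\in J(S)$.

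An equivalent route is to note that $f(J(R))$ is an ideal of $S$ by surjectivity, and that every maximal left ideal of $S$ pulls back to a maximal left ideal of $R$ containing $J(R)$. Either way, nothing else from Lemma~\ref{1.2} is needed.

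The only potential obstacle is remembering that surjectivity is genuinely necessary: without it, $f(J(R))$ need not lie in $J(S)$. A concrete example is the inclusion $\mathbb{Z}_{(p)}\hookrightarrow\mathbb{Q}$, where $p$ lies in $J(\mathbb{Z}_{(p)})$ but $J(\mathbb{Q})=0$. Beyond that point, the argument is routine.
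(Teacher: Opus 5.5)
Your proof is correct and follows the paper's argument exactly: write $f(x)^n=f(x^n)\in f(J(R))\subseteq J(S)$. The only difference is that the paper cites the inclusion $f(J(R))\subseteq J(S)$ for surjective $f$ (Lam, Ex.~4.10), whereas you prove it via the quasi-regularity characterization of $J$.
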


\begin{proof} Let $t\in f(\sqrt{J(R)})$. Then, $t=f(x)$ where $x\in \sqrt{J(R)}$. So, $x^n\in J(R)$ for some $n$. Hence, $$t^n=(f(x))^n=f(x^n)\in f(J(R))\subseteq J(S)$$ referring to \cite[Ex.4.10]{lam}. Therefore, $t\in \sqrt{J(S)}$, as required.
\end{proof}

We now logically arrive at the following fundamental concept.

\begin{definition} A ring \( R \) is referred to as {\it weakly $\sqrt{J}U$}, abbreviated for further use as $W\sqrt{J}U$, if at least one of the equalities \[ U(R) = \pm1 + \sqrt{J(R)}. \] is fulfilled.

This condition is obviously equivalent to requiring that every unit in \( R \) can be expressed as \( j + 1 \) or \( j - 1 \), where \( j \in \sqrt{J(R)} \).	
\end{definition}

A quick observation that the defined class of rings is {\bf new} and {\it non-trivial} is like this.

\begin{example} The ring $\mathbb{Z}_3$ is $W\sqrt{J}U$, but {\it not} $\sqrt{J}U$.
\end{example}

Recall that a {\it rationally closed} subring of a ring $R$ is such a subring $S$ that possesses the property $U(R) \cap S = U(S)$. It is easy to see that every unital subring has this property.

\begin{proposition}\label{sub} Let $S$ be a rationally closed subring of $R$. If $R$ is a $W\sqrt{J}U$ ring,
then $S$ is also a $W\sqrt{J}U$ ring.
\end{proposition}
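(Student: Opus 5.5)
Fix $u \in U(S)$; the plan is to show that $u \mp 1$ lies in $\sqrt{J(S)}$. Since $U(S) \subseteq U(R)$ and $R$ is $W\sqrt{J}U$, we can write $u = \varepsilon + j$ with $\varepsilon \in \{1,-1\}$ and $j \in \sqrt{J(R)}$. Because $u$ and $\pm 1$ both lie in $S$, the element $j = u - \varepsilon$ lies in $S$. By definition of $\sqrt{J(R)}$, some power $j^n$ lies in $J(R)$, and $j^n \in S$ since $S$ is a subring. So everything reduces to showing that $J(R) \cap S \subseteq J(S)$. Then $j^n \in J(S)$, hence $j \in \sqrt{J(S)}$, and so $u \in \pm 1 + \sqrt{J(S)}$.

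The key step, and the only one with real content, is the inclusion $J(R)\cap S \subseteq J(S)$. This is exactly where rational closedness is needed. Take $x \in J(R) \cap S$. For every $s \in S$ we have $sx \in J(R)$, so $1 - sx \in U(R)$. Moreover $1 - sx \in S$, hence $1 - sx \in U(R) \cap S = U(S)$.

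By the standard characterization of the Jacobson radical, $x \in J(S)$ if and only if $1 - sx$ is (left) invertible in $S$ for all $s \in S$. This condition holds, so $x \in J(S)$.

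I expect no genuine obstacle beyond this observation. One only has to be careful to invoke the hypothesis $U(R)\cap S = U(S)$, rather than merely $S \subseteq R$: for a general subring, a unit of $R$ lying in $S$ need not be invertible in $S$, and the argument would break at this point.
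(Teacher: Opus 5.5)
Your proof is correct and follows the same route as the paper: rational closedness gives $S\cap J(R)\subseteq J(S)$, hence $S\cap\sqrt{J(R)}\subseteq\sqrt{J(S)}$, and the result follows. The only difference is that you supply the verifications the paper leaves to the reader, namely the argument $1-sx\in U(R)\cap S=U(S)$ and the passage from $j^n\in J(R)\cap S$ to $j\in\sqrt{J(S)}$.
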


\begin{proof} Since $S \cap U(R) = U(S)$, we have \(S \cap J(R) \subseteq J(S)\). Also, it can easily be shown that \(S \cap \sqrt{J(R)} \subseteq \sqrt{J(S)}\), which we leave to the interested reader for a direct check, as requested.
\end{proof}

We proceed by proving a next series of technicalities necessary for our further exploitation.

\begin{proposition}\label{1} Let \(I \subseteq J(R)\) be an ideal of a ring \(R\). Then, \(R\) is $W\sqrt{J}U$
if and only if so is the quotient \(R/I\).
\end{proposition}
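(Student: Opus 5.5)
The plan is to prove both directions directly from the definition, using two standard facts: units lift modulo an ideal contained in $J(R)$, and Lemma \ref{1.2}(5), which gives $\sqrt{J(R/I)}=\sqrt{J(R)}/I$. Write $\bar{x}=x+I$.

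\emph{($\Rightarrow$)} Assume $R$ is $W\sqrt{J}U$ and take $\bar{u}\in U(R/I)$.
\begin{enumerate}
\item Since $I\subseteq J(R)$, the element $u$ is already a unit of $R$. Indeed, if $uv-1\in I$ and $vu-1\in I$, then $uv\in 1+J(R)\subseteq U(R)$ and likewise $vu\in U(R)$. So $u$ has both a right and a left inverse, hence $u\in U(R)$.
\item By hypothesis $u=\pm1+j$ for some $j\in\sqrt{J(R)}$.
\item Then $\bar{u}=\pm\bar{1}+\bar{j}$. By Lemma \ref{1.2}(5), or simply by Lemma \ref{2.1}, $\bar{j}\in\sqrt{J(R/I)}$.
\end{enumerate}
Hence $R/I$ is $W\sqrt{J}U$.

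\emph{($\Leftarrow$)} Assume $R/I$ is $W\sqrt{J}U$ and take $u\in U(R)$.
\begin{enumerate}
\item Then $\bar{u}\in U(R/I)$, so $\bar{u}=\pm\bar{1}+\bar{t}$ for some $\bar{t}\in\sqrt{J(R/I)}$.
\item By Lemma \ref{1.2}(5), $\sqrt{J(R/I)}=\sqrt{J(R)}/I$. Since $\bar{u}\mp\bar{1}=\bar{t}$, the element $u\mp1$ itself lies in $\sqrt{J(R)}$.
\item To see this without relying on how Lemma \ref{1.2}(5) is phrased, argue directly. If $\bar{x}\in\sqrt{J(R/I)}$, then $\bar{x}^n\in J(R/I)$ for some $n$. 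Since $I\subseteq J(R)$, we have $J(R/I)=J(R)/I$. So $x^n\in J(R)+I=J(R)$, and therefore $x\in\sqrt{J(R)}$.
\item Applying this with $x=u\mp1$ gives $u\in\pm1+\sqrt{J(R)}$.
\end{enumerate}
Hence $R$ is $W\sqrt{J}U$.

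The sign ($+1$ versus $-1$) is chosen unit by unit, so the definition's "one of the equalities" causes no difficulty. The only delicate points are the lifting of units modulo $I$ and the identity $J(R/I)=J(R)/I$ for $I\subseteq J(R)$. Both are routine, so no genuine obstacle is expected.
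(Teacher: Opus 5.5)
Your proof is correct and follows essentially the same route as the paper: lift units modulo $I\subseteq J(R)$ for the forward direction, and for the converse pull the decomposition back through $\sqrt{J(R/I)}=\sqrt{J(R)}/I$. Your direct check that every preimage of an element of $\sqrt{J(R/I)}$ already lies in $\sqrt{J(R)}$ (via $J(R/I)=J(R)/I$) neatly supplies the absorption of the $I$-error term, which the paper attributes to Lemma \ref{1.2}(8); strictly, that step needs $\sqrt{J(R)}+J(R)\subseteq\sqrt{J(R)}$, which your argument proves.
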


\begin{proof} Suppose \(R\) is a $W\sqrt{J}U$ ring and choose \(u + I \in U(R/I)\). Thus, \(u \in U(R)\) and hence \(u = \pm1 + j\), where \(j \in \sqrt{J(R)}\). Therefore, one writes that
\[(u + I) = \pm(1+I)+(j+I),\] where \(j + I \in \sqrt{J(R)}/I = \sqrt{J(R/I)}\) in view of Lemma \ref{1.2}(5).

Conversely, suppose \(R/I\) is a $W\sqrt{J}U$ ring and choose \(u \in U(R)\). Thus, \(u + I \in U(R/I)\) whence \((u + I) = \pm(1 + I) + (j + I)\), where \(j + I \in \sqrt{J(R/I)}=\sqrt{J(R)}/I \). This means that \(u + I = \pm(1 + j) + I\). So, \[u \pm(1 + j) \in I \subseteq J(R).\] Consequently, \(u = \pm1 + j^\prime\), where \(j^\prime \in \sqrt{J(R)}\) in accordance with Lemma \ref{1.2}(8). Finally, \(R\) is a $W\sqrt{J}U$ ring, as expected.
\end{proof}

\begin{proposition}\label{product} Let $R$ be a $W\sqrt{J}U$ ring and $S$ a $\sqrt{J}U$ ring. Then, $R\times S$
is a $W\sqrt{J}U$ ring.
\end{proposition}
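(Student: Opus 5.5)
The plan is to work componentwise, using Lemma \ref{1.2}(6), which gives $\sqrt{J(R\times S)}=\sqrt{J(R)}\times\sqrt{J(S)}$, together with the obvious equality $U(R\times S)=U(R)\times U(S)$. Take a unit $(u,v)\in U(R\times S)$. Since $R$ is $W\sqrt{J}U$, we can write $u=\varepsilon+j$ with $\varepsilon\in\{1,-1\}$ and $j\in\sqrt{J(R)}$. What remains is to write $v=\varepsilon+k$ for some $k\in\sqrt{J(S)}$ with the \emph{same} sign $\varepsilon$. Then $(u,v)=\varepsilon(1,1)+(j,k)$ with $(j,k)\in\sqrt{J(R\times S)}$, as required.

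The step that needs care is matching the signs. The case $\varepsilon=1$ is immediate, because $S$ is $\sqrt{J}U$ and so $v=1+k$ with $k\in\sqrt{J(S)}$. In the case $\varepsilon=-1$ we use that $-v$ is also a unit of $S$, so $-v=1+k'$ with $k'\in\sqrt{J(S)}$. This gives $v=-1+(-k')$, and it remains to check that $-k'\in\sqrt{J(S)}$.

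That check follows from Lemma \ref{1.2}(1) with $b=-1$, which is central. Alternatively, one can argue directly: $(-k')^n=\pm k'^n\in J(S)$. Hence $v=-1+(-k')$ with $-k'\in\sqrt{J(S)}$.

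I do not expect any real obstacle. The whole argument reduces to the observation that a $\sqrt{J}U$ ring satisfies both $U(S)=1+\sqrt{J(S)}$ and $U(S)=-1+\sqrt{J(S)}$, since $\sqrt{J(S)}$ is closed under negation and $U(S)=-U(S)$. Consequently, whichever sign the $R$-coordinate forces, the $S$-coordinate can be adjusted to use the same sign.
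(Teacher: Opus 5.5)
Your proposal is correct and follows the paper's proof: the same two cases on the sign of the $R$-component, with the case $\varepsilon=-1$ handled by writing $-v=1+k'$ and using $v=-1+(-k')$ together with Lemma~\ref{1.2}(6). You also verify explicitly that $-k'\in\sqrt{J(S)}$ (via $(-k')^n=\pm k'^n\in J(S)$), a step the paper uses without comment.
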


\begin{proof} Choose \((u, v) \in U(R \times S) = U(R) \times U(S)\). Thus, there exists \( j \in \sqrt{J(R)} \) such that \( u = 1 + j \) or \( u = -1 + j \). We now distinguish the following two cases:

\medskip

\noindent\textbf{Case I.} Suppose \( u = 1 + j \). Then, since \( v \in U(S) \), there is \( j' \in
\sqrt{J(S)} \) such that \( v = 1 + j' \). Hence, \[ (u, v) = (1, 1) + (j, j'), \] where \( (j, j') \in
\sqrt{J(R \times S)} \) owing to Lemma \ref{1.2}(6).

\medskip

\noindent\textbf{Case II.} Suppose \( u = -1 + j \). Then, since \( v \in U(S) \), there is \( j' \in
\sqrt{J(S)} \) such that \( -v = 1 + j' \), i.e., \( v = -1 + (-j') \). Therefore, \[ (u, v) = -(1, 1) + (j,
-j'), \] where \( (j, -j') \in \sqrt{J(R \times S)} \) according to Lemma \ref{1.2}(6).
\end{proof}

We amend the last assertion to the following more general statement.

\begin{proposition}\label{productt} Let $\{R_i\}_{i \in I}$ be an arbitrary family of rings. Then, the direct product $R=\prod_{i \in I} R_i$ of rings $R_i$ is $W\sqrt{J}U$ if and only if each $R_i$ is $W\sqrt{J}U$ and at most one of them is {\it not} $\sqrt{J}U$, that is, all but one are $\sqrt{J}U$ as the excluded member is proper $W\sqrt{J}U$.
\end{proposition}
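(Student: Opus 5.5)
Both directions rest on Lemma~\ref{1.2}(6), $\sqrt{J(\prod R_i)}=\prod\sqrt{J(R_i)}$, together with $U(\prod R_i)=\prod U(R_i)$. A unit of $R$ is a family $(u_i)$ with every $u_i\in U(R_i)$, and it lies in $\varepsilon 1+\sqrt{J(R)}$ exactly when $u_i\in\varepsilon 1+\sqrt{J(R_i)}$ for every $i$, with the same sign $\varepsilon\in\{1,-1\}$ for all coordinates.

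\emph{Sufficiency.} If all $R_i$ are $\sqrt{J}U$, then every component is of the form $1+j_i$ and we are done. Otherwise exactly one index $i_0$ has $R_{i_0}$ merely $W\sqrt{J}U$. Given $(u_i)$, write $u_{i_0}=\varepsilon+j_{i_0}$. For $i\neq i_0$, the ring $R_i$ is $\sqrt{J}U$, so $\varepsilon u_i\in U(R_i)$ gives $\varepsilon u_i=1+j_i'$, i.e.\ $u_i=\varepsilon+\varepsilon j_i'$, and $\varepsilon j_i'\in\sqrt{J(R_i)}$ since $\sqrt{J}$ is closed under negation. Hence $(u_i)\in\varepsilon+\prod\sqrt{J(R_i)}=\varepsilon+\sqrt{J(R)}$. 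This is exactly the argument of Proposition~\ref{product}, carried out coordinatewise.

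\emph{Necessity.} Each $R_k$ is a homomorphic image of $R$ via the projection, with kernel $\prod_{i\ne k}R_i$. Every unit $u_k$ lifts to the unit $(1,\dots,u_k,\dots,1)$ of $R$, so Lemma~\ref{2.1} gives that $R_k$ is $W\sqrt{J}U$; alternatively, take the $k$-th coordinate of the decomposition given by Lemma~\ref{1.2}(6). For the main point, suppose two indices $k\neq l$ have $R_k,R_l$ not $\sqrt{J}U$. Choose $a\in U(R_k)\setminus(1+\sqrt{J(R_k)})$, so necessarily $a\in -1+\sqrt{J(R_k)}$, and choose $b\in U(R_l)\setminus(1+\sqrt{J(R_l)})$. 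Then $-b\in U(R_l)$ and $-b\notin -1+\sqrt{J(R_l)}$. Consider the unit $u$ with $u_k=a$, $u_l=-b$, and $u_i=1$ elsewhere. If $u\in 1+\sqrt{J(R)}$, then $a\in1+\sqrt{J(R_k)}$, a contradiction. If $u\in -1+\sqrt{J(R)}$, then $-b\in-1+\sqrt{J(R_l)}$, a contradiction. So $R$ is not $W\sqrt{J}U$.

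The only delicate point, and the one to check carefully, is the bookkeeping with the signs: one must use that $\sqrt{J}$ is symmetric, which holds because $(-x)^n=\pm x^n$. One also needs the coordinatewise description of $\sqrt{J}$ for an arbitrary (possibly infinite) product. There the exponents $n_i$ may be unbounded across coordinates, so I would rely on Lemma~\ref{1.2}(6) exactly as stated, since the necessity argument only uses the inclusion $\sqrt{J(R)}\subseteq\prod\sqrt{J(R_i)}$, which holds without that issue. In the sufficiency argument the reverse inclusion is needed, and this is where Lemma~\ref{1.2}(6) is essential.
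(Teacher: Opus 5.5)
Your argument follows the paper's. Necessity uses the unit that is $a$ in one non-$\sqrt{J}U$ coordinate, $-b$ in another and $1$ elsewhere, and your version is cleaner than the paper's. Sufficiency is the sign-matching of Proposition~\ref{product} carried out coordinatewise; the paper instead first invokes Proposition~2.4 of Saini--Udar to make $\prod_{i\neq i_0}R_i$ a $\sqrt{J}U$ ring, then applies Proposition~\ref{product} to two factors. For finite $I$ this is complete and correct.

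The point you flagged, however, is a genuine gap, and relying on Lemma~\ref{1.2}(6) ``exactly as stated'' does not close it, because that identity is false for infinite products. Since $J(\prod R_i)=\prod J(R_i)$, a tuple $(x_i)$ lies in $\sqrt{J(\prod R_i)}$ only if a single exponent works in every coordinate. For example, in $\prod_{n\ge 1}{\rm M}_n(F)$ with $F$ a field, the Jacobson radical is zero, so $\sqrt{J}$ is the set of nilpotents. The tuple $(N_n)_n$ of nilpotent Jordan blocks lies in $\prod\sqrt{J({\rm M}_n(F))}$ but is not nilpotent. Hence your step $(u_i-\varepsilon)_i\in\prod\sqrt{J(R_i)}=\sqrt{J(R)}$, and likewise your ``all $R_i$ are $\sqrt{J}U$'' case, is justified only when $I$ is finite, or when some uniform $N$ gives $(u_i-\varepsilon)^N\in J(R_i)$ for all $i$.

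A cleverer argument cannot dispense with such a bound. Suppose $R_n$ are $\sqrt{J}U$ rings with units $u_n$ such that $(u_n-1)^n\notin J(R_n)$. Then $2\in J(R_n)$, so $(u_n+1)^k\equiv(u_n-1)^k$ modulo $J(R_n)$. It follows that the unit $(u_n)_n$ of $\prod_n R_n$ lies in neither $1+\sqrt{J}$ nor $-1+\sqrt{J}$, so the sufficiency half of the statement fails as soon as such rings exist.

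The paper's proof has the same weak spot, hidden in its appeal to Saini--Udar's Proposition~2.4 for the possibly infinite product $\prod_{i\ne i_0}R_i$. What you (and the paper) actually establish is:
\begin{itemize}
\item the full equivalence for finite families;
\item the necessity direction for arbitrary families, which uses only the true inclusion $\sqrt{J(\prod R_i)}\subseteq\prod\sqrt{J(R_i)}$.
\end{itemize}
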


\begin{proof} ($\Rightarrow$) Manifestly, every $R_i$ is $W\sqrt{J}U$ as being a homomorphic image. Suppose now there are two members $R_{i_1}$ and $R_{i_2}$ with $i_1 \neq i_2$ that are {\it not} $\sqrt{J}U$. Then, there exist some $u_{i_j} \in U (R_{i_j})$ with $j=1,2$ such that both $u_{i_1} \in U (R_{i_1})$ and $-u_{i_2} \in U (R_{i_2})$ are {\it not} $\sqrt{J}U$ decomposable. Choosing $u=(u_i)$, where $u_i=1$ whenever $i \neq i_{j}$ $j=1,2$, we infer that both $u$ and $-u$ are {\it not} the sum of $1$ and a element from $\sqrt{J}$, as desired to get a contradiction. Consequently, almost all but one $R_i$ are $\sqrt{J}U$ rings as the excluded member has to be $W\sqrt{J}U$.\\ ($\Leftarrow$) Assume that $R_{i_0}$ is a $W\sqrt{J}U$ ring and all of the other $R_i$ are $\sqrt{J}U$. So, thanks to \cite[Proposition 2.4]{SU}, we perceive that $\prod_{i \neq i_{0}} R_i$ is $\sqrt{J}U$. Now, Proposition \ref{product} applies to conclude that $R$ is a $W\sqrt{J}U$ ring, as wanted.
\end{proof}

As two immediate consequences, we yield:

\begin{corollary}\label{pro1} Let $L=\prod_{i \in I} R_i$ be the direct product of rings $R_i \cong R$ and $|I|
\geq 2$. Then, $L$ is a $W\sqrt{J}U$ ring if and only if $L$ is a $\sqrt{J}U$ ring if and only if $R$ is a
$\sqrt{J}U$ ring.
\end{corollary}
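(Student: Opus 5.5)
The corollary follows quickly from Proposition \ref{productt} and Lemma \ref{1.2}(6). We prove the cycle: $L$ is $W\sqrt{J}U$ $\Rightarrow$ $R$ is $\sqrt{J}U$ $\Rightarrow$ $L$ is $\sqrt{J}U$ $\Rightarrow$ $L$ is $W\sqrt{J}U$. The last implication is trivial, since $U(L)=1+\sqrt{J(L)}$ implies $U(L)=\pm1+\sqrt{J(L)}$ in the sense of the definition.

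For the first implication, assume $L$ is $W\sqrt{J}U$. By Proposition \ref{productt}, every factor $R_i\cong R$ is $W\sqrt{J}U$, and at most one factor fails to be $\sqrt{J}U$. Since $|I|\ge 2$, choose indices $i_1\neq i_2$. At least one of $R_{i_1}, R_{i_2}$ is $\sqrt{J}U$. Being $\sqrt{J}U$ is invariant under ring isomorphism, because the isomorphism carries units to units and $\sqrt{J}$ onto $\sqrt{J}$. Hence $R$ is $\sqrt{J}U$.

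For the second implication, assume $R$ is $\sqrt{J}U$, so each $R_i$ is $\sqrt{J}U$. We check directly that $L$ is $\sqrt{J}U$; this is also \cite[Proposition 2.4]{SU}, already used in the proof of Proposition \ref{productt}. Let $u=(u_i)\in U(L)=\prod U(R_i)$. Then each $u_i=1+j_i$ with $j_i\in\sqrt{J(R_i)}$, so $u=1+(j_i)$. By Lemma \ref{1.2}(6), $(j_i)\in\prod\sqrt{J(R_i)}=\sqrt{J(L)}$.

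The only point needing care is that Lemma \ref{1.2}(6) is applied to arbitrary, possibly infinite, products. There the exponents $n_i$ with $j_i^{n_i}\in J(R_i)$ could be unbounded, so a single $n$ with $(j_i)^n\in J(L)$ need not exist. We take Lemma \ref{1.2}(6) as stated in the paper, or equivalently cite \cite[Proposition 2.4]{SU}, and do not re-derive it.
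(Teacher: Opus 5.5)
Your cycle follows the paper: the corollary is read off from Proposition~\ref{productt}. Two of your three implications are sound for every index set, namely $L$ $W\sqrt{J}U\Rightarrow R$ $\sqrt{J}U$ and $L$ $\sqrt{J}U\Rightarrow L$ $W\sqrt{J}U$. They use nothing beyond the inclusion $\sqrt{J(L)}\subseteq\prod\sqrt{J(R_i)}$, which always holds.

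The trouble is the middle implication, $R$ $\sqrt{J}U\Rightarrow L$ $\sqrt{J}U$. The caveat you flag there is a genuine gap, and citing Lemma~\ref{1.2}(6) does not close it, because that lemma is false for infinite products. Take $L=\prod_{n\ge 1}{\rm M}_n(\mathbb{F}_2)$. Then $J(L)=\prod J({\rm M}_n(\mathbb{F}_2))=0$, so $\sqrt{J(L)}={\rm Nil}(L)$. Let $N_n$ be the nilpotent $n\times n$ Jordan block. The element $(N_n)_n$ lies in $\prod\sqrt{J({\rm M}_n(\mathbb{F}_2))}$ but is not nilpotent. So your step $(j_i)\in\prod\sqrt{J(R_i)}=\sqrt{J(L)}$ is justified only for finite $I$, where you can take $n=\max_i n_i$. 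The paper's route has exactly the same dependence, since it invokes \cite[Proposition 2.4]{SU} for an arbitrary product of $\sqrt{J}U$ rings.

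For infinite $I$ with all $R_i\cong R$, that step really requires a uniform $n$ with $(u-1)^n\in J(R)$ for all $u\in U(R)$. This bound is also necessary. Suppose $u_k\in U(R)$ satisfy $(u_k-1)^k\notin J(R)$. Place the $u_k$ on countably many coordinates, with $1$ elsewhere, to get $u\in U(L)$. Then $(u-1)^k\notin\prod_{i\in I}J(R_i)=J(L)$ for every $k$, so $u-1\notin\sqrt{J(L)}$. Neither your argument nor the paper derives such a bound from the $\sqrt{J}U$ property. As written, the proof establishes the full corollary only for finite $I$.

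For infinite $I$ you can still recover the first equivalence without Lemma~\ref{1.2}(6). If $L$ is $W\sqrt{J}U$, then $R$ is $\sqrt{J}U$, so $2\in J(R)$ by Proposition~\ref{mino}. Hence $2\in\prod J(R_i)=J(L)$, and Proposition~\ref{mino} again makes $L$ a $\sqrt{J}U$ ring. The remaining implication, from $R$ being $\sqrt{J}U$ to $L$ being $\sqrt{J}U$, needs the uniform bound either as an extra hypothesis or from a separate argument.
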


\begin{corollary}\label{10} For any $n \geq 2$, the ring $R^n$ is $W\sqrt{J}U$ if and only if $R^n$ is
$\sqrt{J}U$ if and only if $R$ is $\sqrt{J}U$.
\end{corollary}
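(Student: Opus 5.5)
The claim is that for $n\ge 2$ the three conditions ``$R^n$ is $W\sqrt{J}U$'', ``$R^n$ is $\sqrt{J}U$'' and ``$R$ is $\sqrt{J}U$'' are equivalent. I will show this by specializing Proposition \ref{productt}, or equivalently Corollary \ref{pro1}, to the finite index set $I=\{1,\dots,n\}$, where every factor $R_i$ is a copy of $R$. The argument is a cycle of three implications.

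First, suppose $R^n$ is $W\sqrt{J}U$. By Proposition \ref{productt}, each factor is $W\sqrt{J}U$ and at most one factor fails to be $\sqrt{J}U$. All factors are isomorphic to $R$, and there are at least two of them. If $R$ were not $\sqrt{J}U$, then every factor would fail, giving at least two failing factors, which is a contradiction. Hence $R$ is $\sqrt{J}U$.

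Second, suppose $R$ is $\sqrt{J}U$. Then $R^n$ is $\sqrt{J}U$ by \cite[Proposition 2.4]{SU} on direct products. This can also be checked by hand: a unit $(u_1,\dots,u_n)$ has $u_i=1+j_i$ with $j_i\in\sqrt{J(R)}$ for each $i$, and $(j_1,\dots,j_n)\in\sqrt{J(R^n)}$ by Lemma \ref{1.2}(6).

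Third, if $R^n$ is $\sqrt{J}U$, then it is $W\sqrt{J}U$ directly from the definition, since $U(R^n)=1+\sqrt{J(R^n)}$ is one of the two allowed equalities. This closes the cycle.

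The only point needing any care is the first implication, specifically the use of $n\ge 2$ to rule out the single exceptional factor permitted by Proposition \ref{productt}. The rest is immediate from the results already stated.
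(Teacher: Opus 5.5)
Your proof is correct and follows essentially the same route as the paper. The paper gives no separate proof and presents this corollary as an immediate consequence of Proposition \ref{productt} (equivalently Corollary \ref{pro1} with $|I|=n$); your cycle of implications, using $n\ge 2$ to rule out the single non-$\sqrt{J}U$ factor that proposition allows, is exactly that specialization.
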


\begin{proposition}\label{matrix} For any non-zero ring \( R \) and any natural number \( n \geq 2 \), the full
matrix ring \( {\rm M}_n(R) \) is not a $W\sqrt{J}U$ ring.
\end{proposition}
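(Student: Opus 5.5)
Suppose, for a contradiction, that $S={\rm M}_n(R)$ is $W\sqrt{J}U$ for some $n\ge 2$. The approach is to pass to a corner ring, exhibit one explicit unit there, and show that neither $u-1$ nor $u+1$ lies in $\sqrt{J}$.

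First, I reduce to the case $n=2$. Take $e=E_{11}+E_{22}$. Then $e{\rm M}_n(R)e\cong {\rm M}_2(R)$, and the map $A\mapsto A\oplus I_{n-2}$ (block diagonal) is a unital ring embedding ${\rm M}_2(R)\hookrightarrow{\rm M}_n(R)$. Its image $T$ is rationally closed: if $A\oplus I_{n-2}$ is invertible in ${\rm M}_n(R)$, its inverse commutes with $\operatorname{diag}(0,0,1,\dots,1)$, so it has the same block form. Hence ${\rm M}_2(R)$ is $W\sqrt{J}U$ by Proposition \ref{sub}. (If one prefers to avoid the rationally-closed check, I would use $n\ge2$ directly with the unit $I+E_{12}$ versus $-I+E_{12}$; but the reduction seems cleanest.)

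Now work in ${\rm M}_2(R)$ with the unit
\[ u=\begin{pmatrix}0&1\\1&0\end{pmatrix},\qquad u^2=I. \]
Suppose $u=1+j$ with $j\in\sqrt{J}$. Then $j=u-1$, and since $(u-1)^2=2-2u=-2(u-1)$, we get $(u-1)^m=(-2)^{m-1}(u-1)$. Thus $(u-1)^m\in J({\rm M}_2(R))={\rm M}_2(J(R))$ forces $2^{m-1}\in J(R)$ (read off an off-diagonal entry), so $2\in J(R)$. The case $u=-1+j$ is symmetric, via $(u+1)^2=2(u+1)$, and gives the same conclusion. Passing to $\bar R=R/J(R)$ with Proposition \ref{1} (using ${\rm M}_2(J(R))=J({\rm M}_2(R))$), we may assume $J(R)=0$ and $2=0$. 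Then $u\pm1=u+1$ must be nilpotent, and indeed $(u+1)^2=0$, so this unit gives no contradiction. This case is the main obstacle, and I expect to choose a second unit.

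For the second unit, in characteristic $2$ with $J(R)=0$, I take
\[ v=\begin{pmatrix}1&1\\1&0\end{pmatrix}, \]
which is invertible since $v^2=v+1$, so $v(v-1)=1$. Then $w=v+1$ satisfies $w^2=v^2+1=v$, a unit. So $v+1=v-1$ has a unit square and cannot be nilpotent; since $J({\rm M}_2(\bar R))=0$, it is not in $\sqrt{J}$. As $-1=1$ here, this contradicts the $W\sqrt{J}U$ property, because $R\neq0$ gives $\bar R\ne 0$ (note $J(R)\ne R$ for a nonzero unital ring). This settles the statement. The delicate points are the reduction to $\bar R$, where I rely on Proposition \ref{1} together with ${\rm M}_2(J(R))=J({\rm M}_2(R))$, and the corner reduction.
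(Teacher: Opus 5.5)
There is one genuine error, in the reduction to $n=2$. The map $A\mapsto A\oplus I_{n-2}$ is multiplicative and unital but not additive, since $(A\oplus I_{n-2})+(B\oplus I_{n-2})=(A+B)\oplus 2I_{n-2}$. So its image is not a subring of ${\rm M}_n(R)$, and Proposition~\ref{sub} cannot be applied to it. This is not just a matter of choosing a better map: for odd $n$ and $R$ a field there is no unital ring homomorphism ${\rm M}_2(R)\to{\rm M}_n(R)$ at all. The images of $E_{11}$ and $E_{22}$ would be equivalent orthogonal idempotents with sum $I_n$, which forces $n$ to be even.

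The repair is easy. You can apply Proposition~\ref{corner} to the idempotent $e=E_{11}+E_{22}$ that you already wrote down, which is exactly what the paper does. Alternatively, use the genuine unital subring ${\rm M}_2(R)\times{\rm M}_{n-2}(R)$ of block-diagonal matrices. It is rationally closed by precisely your commutation argument with $\operatorname{diag}(0,0,1,\dots,1)$, and you then pass to the first factor as in Proposition~\ref{productt}. Your parenthetical fallback with $I+E_{12}$ would not work either: $I+E_{12}=1+E_{12}$ with $E_{12}$ nilpotent is already of the allowed form.

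Once in ${\rm M}_2(R)$, your argument is correct but takes an unnecessary detour. The paper uses the single unit $A=\begin{pmatrix}0&1\\1&1\end{pmatrix}$, which is your $v$ conjugated by the swap matrix. Since $A^2=A+I$ holds over every ring, $A+I=A^2$ and $A-I=A^{-1}$ are units of ${\rm M}_2(R)$. A unit never lies in $\sqrt{J}$ of a non-zero ring, so $A\notin\pm1+\sqrt{J({\rm M}_2(R))}$. Your $v$ satisfies the same identity $v^2=v+I$ in any characteristic, so $v+1=v^2$ and $v-1=v^{-1}$ are already units in ${\rm M}_2(R)$. The first stage, using $u$ to force $2\in J(R)$ and then passing to $R/J(R)$ via Proposition~\ref{1}, is valid but buys nothing.
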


\begin{proof} Since \( {\rm M}_2(R) \) is isomorphic to a corner ring of \( {\rm M}_n(R) \), in conjunction with Proposition~\ref{corner} listed and proved below, it suffices to show that \( {\rm M}_2(R) \) is not a $W\sqrt{J}U$ ring.

To that goal, consider the matrix unit $A = \begin{pmatrix} 0 & 1 \\ 1 & 1 \end{pmatrix}$. Now, one observes that
\[ A + I =
\begin{pmatrix} 0 & 1 \\ 1 & 1 \end{pmatrix} + \begin{pmatrix} 1 & 0 \\ 0 & 1 \end{pmatrix} = \begin{pmatrix} 1
& 1 \\ 1 & 2 \end{pmatrix}, \] which is a unit and hence cannot lie in $\sqrt{J(R)}$.

Furthermore, one sees that \[ A - I = \begin{pmatrix} 0 & 1 \\ 1 & 1 \end{pmatrix} - \begin{pmatrix} 1 & 0 \\ 0 & 1
\end{pmatrix} = \begin{pmatrix} -1 & 1 \\ 1 & 0 \end{pmatrix}, \] which is a unit too, and so it is not in
$\sqrt{J(R)}$ either.

Consequently, one concludes that \( {\rm M}_2(R) \) is not a $W\sqrt{J}U$ ring, as pursued.
\end{proof}

A set $\{e_{ij} : 1 \le i, j \le n\}$ of non-zero elements of $R$ is said to be a {\it system of $n^2$ matrix units} if $e_{ij}e_{st} = \delta_{js}e_{it}$, where $\delta_{jj} = 1$ and $\delta_{js} = 0$ for $j \neq s$. In this case, $e := \sum_{i=1}^{n} e_{ii}$ is an idempotent of $R$ and $eRe \cong {\rm M}_n(S)$, where $$S = \{r \in eRe : re_{ij} = e_{ij}r,~~\textrm{for all}~~ i, j = 1, 2, . . . , n\}.$$

\noindent Recall also that a ring $R$ is said to be {\it Dedekind-finite} if $ab=1$ assures $ba=1$ for any $a,b\in R$. In other words, all one-sided inverses in such a ring are necessarily two-sided inverses.

\begin{proposition}\label{dedekind} Every $W\sqrt{J}U$ ring is Dedekind-finite.
\end{proposition}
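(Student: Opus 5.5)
The plan is to argue by contradiction, using the standard fact that a ring which is not Dedekind-finite contains an infinite system of matrix units. More precisely, suppose $ab=1$ but $ba\neq 1$. Put $e_{ij}=b^{i}(1-ba)a^{j}$ for $i,j\ge 0$. The classical Jacobson computation, using only $ab=1$, gives $e_{ij}e_{st}=\delta_{js}e_{it}$. Each $e_{ij}$ is non-zero: multiplying on the left by $a^{i}$ and on the right by $b^{j}$ recovers $1-ba\neq 0$.

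In particular, $\{e_{ij}:0\le i,j\le 1\}$ is a system of $4$ matrix units. By the remark preceding the proposition, $e=e_{00}+e_{11}$ is an idempotent with $eRe\cong {\rm M}_2(S)$ for a suitable ring $S$. Here $S\neq 0$, because it contains $e\neq 0$ as its identity.

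Next I would invoke Proposition~\ref{corner}, announced in the proof of Proposition~\ref{matrix}, which says that corner rings of $W\sqrt{J}U$ rings are again $W\sqrt{J}U$. This makes $eRe$, and hence ${\rm M}_2(S)$, a $W\sqrt{J}U$ ring. That contradicts Proposition~\ref{matrix}, so $ba=1$ and $R$ is Dedekind-finite.

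The main obstacle is that Proposition~\ref{corner} is only cited and proved later, so I have to trust its statement. If I wanted to avoid it, I would work directly inside $R$ with the element
\[ u=A+(1-e), \qquad A=e_{01}+e_{10}+e_{11}, \]
where $A$ plays the role of the matrix $\begin{pmatrix}0&1\\1&1\end{pmatrix}$ in the corner $eRe$. This $u$ is a unit of $R$, since the inverse of $A$ in ${\rm M}_2(S)$, namely $-e_{00}+e_{01}+e_{10}$, plus $1-e$ inverts it. So $u=\pm1+j$ with $j\in\sqrt{J(R)}$.

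Then $u\mp 1=j$. Compressing by $e$ gives $e(u\mp1)e=A\mp e$, which is a unit of $eRe$ by the computation in Proposition~\ref{matrix}. The required fact is $e\sqrt{J(R)}e\subseteq\sqrt{J(eRe)}$. Its main ingredient is $eJ(R)e=J(eRe)$, together with the fact that $j$ commutes with $e$ here, since $u$ and $1$ do. With that inclusion, $A\mp e$ would be a unit lying in $\sqrt{J(eRe)}$, which is impossible by Lemma~\ref{1.2}(3) applied in $eRe$.
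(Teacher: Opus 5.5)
Your proof is correct and follows the paper's argument: from $ab=1\neq ba$ you build matrix units, pass to the corner $eRe\cong {\rm M}_2(S)$, and contradict Proposition~\ref{matrix} via Proposition~\ref{corner}; your indexing $e_{ij}=b^{i}(1-ba)a^{j}$ is in fact the correct one, since the paper's $a^{i}(1-ba)b^{j}$ vanishes for $i\ge 1$ because $a(1-ba)=0$. In your optional direct variant, the general inclusion $e\sqrt{J(R)}e\subseteq\sqrt{J(eRe)}$ is false: in ${\rm M}_2(k)$ the square-zero matrix $\left(\begin{smallmatrix}1&1\\-1&-1\end{smallmatrix}\right)$ compresses under $e=\left(\begin{smallmatrix}1&0\\0&0\end{smallmatrix}\right)$ to $e$ itself. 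So rely only on the commuting case $je=ej$, where $(eje)^n=ej^ne\in eJ(R)e=J(eRe)$.
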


\begin{proof}
If we assume on the contrary that $R$ is {\it not} a Dedekind-finite ring, then there exist elements $a, b \in
R$ such that $ab = 1$ but $ba \neq 1$. Setting $e_{ij} := a^i(1-ba)b^j$ and $e :=\sum_{i=1}^{n}e_{ii}$, one knows that there is a non-zero ring $S$ such that $eRe \cong M_n(S)$. However, regarding
Proposition~\ref{corner} alluded to below, $eRe$ is a $W\sqrt{J}U$ ring, whence ${\rm M}_n(S)$ must also be a $W\sqrt{J}U$ ring, thus contradicting Proposition~\ref{matrix}, as suspected.
\end{proof}

\begin{lemma}\label{reduced} Let \(R\) be a $W\sqrt{J}U$ ring. If \(J(R) = (0)\) and every non-zero right ideal
of \(R\) contains a non-zero idempotent, then \(R\) is reduced.
\end{lemma}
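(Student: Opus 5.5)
The plan is to argue by contradiction. If $R$ is not reduced, I will produce a non-zero element of square zero, use the hypothesis to obtain a non-zero idempotent in the right ideal it generates, and build from these a full system of $2\times 2$ matrix units. This gives a corner ring isomorphic to ${\rm M}_2(S)$ with $S\neq 0$, which is impossible by Proposition~\ref{matrix} together with Proposition~\ref{corner} (corners of $W\sqrt{J}U$ rings are $W\sqrt{J}U$). This is the same mechanism already used in Proposition~\ref{dedekind}.

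\textbf{Step 1: a square-zero element and an idempotent.} Suppose $R$ is not reduced, and pick $0\neq b$ with $b^n=0$ and $n$ minimal. Then $a:=b^{n-1}\neq 0$ satisfies $a^2=0$. Since $aR\neq 0$, the hypothesis gives an idempotent $0\neq e\in aR$; write $e=ar$ with $r\in R$. Because $a^2=0$, we get $ae=a\cdot ar=0$.

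\textbf{Step 2: off-diagonal elements in the Peirce decomposition.} Put $x:=ea$ and $y:=(1-e)re$. Then
\[ xe=eae=ea\,ar=0, \]
so $x\in eR(1-e)$, and clearly $y\in(1-e)Re$. Moreover
\[ xy=ea(1-e)re = eare - eaere = e\cdot e\cdot e - 0 = e, \]
using $ar=e$ and $ae=0$.

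\textbf{Step 3: the matrix units.} Set $f:=yx\in(1-e)R(1-e)$. From $xy=e$ we get $f^2=y(xy)x=yex=yx=f$, and $ef=fe=0$ since $f$ lies in the complementary corner. Also $f\neq0$, since $x=xyx=xf$ and $x\neq 0$ (as $xy=e\neq0$). Now put
\[ e_{11}=e,\quad e_{12}=x,\quad e_{21}=y,\quad e_{22}=f. \]
The relations $e_{ij}e_{st}=\delta_{js}e_{it}$ follow from $xy=e$, $yx=f$, $ex=x=xf$, $ye=y=fy$, together with the orthogonality $ef=fe=0$ and $x^2=y^2=0$ (both hold because $x\in eR(1-e)$ and $y\in(1-e)Re$). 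Hence, by the remark preceding Proposition~\ref{dedekind}, for $g=e+f$ we have $gRg\cong{\rm M}_2(S)$ with $S\neq 0$, because $e\neq0$.

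\textbf{Step 4: conclusion.} By Proposition~\ref{corner}, $gRg$ is $W\sqrt{J}U$, which contradicts Proposition~\ref{matrix}. Therefore $R$ is reduced.

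The main obstacle is Step 2: choosing the idempotent and the off-diagonal elements so that $xy=e$ holds exactly while $x$ and $y$ still sit in the off-diagonal Peirce corners. The key identity making this work is $ae=0$. Note that the argument does not actually seem to need $J(R)=0$; that hypothesis is harmless but apparently superfluous for this route.
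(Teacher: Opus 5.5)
Your argument is correct. Its skeleton is the same as the paper's: a non-zero square-zero element $a$ yields a corner isomorphic to ${\rm M}_2$ of a non-zero ring, which contradicts Proposition~\ref{matrix} via Proposition~\ref{corner}.

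The difference is in how the corner is produced. The paper quotes an external structure theorem (Theorem 2.1 of its reference [3]), which gives an idempotent $e\in RaR$ with $eRe\cong{\rm M}_2(T)$ for some $T\neq 0$. You instead construct the matrix units $e,\ ea,\ (1-e)re,\ (1-e)rea$ explicitly from a single idempotent $e=ar\in aR$, and the identity $ae=0$ does all the work. Your route buys self-containment: the verification is a few lines of Peirce arithmetic. Moreover, $e=ar$ and $f=yx=(1-e)rea$ both lie in $RaR$, so your $g=e+f$ recovers exactly the conclusion the paper cites in this setting.

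Your side remark about $J(R)=(0)$ can be sharpened: the hypothesis is redundant, not merely unused. $J(R)$ is itself a right ideal, and it contains no non-zero idempotent, since $e\in J(R)$ makes $1-e$ a unit with $(1-e)e=0$. So the idempotent hypothesis already forces $J(R)=(0)$.
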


\begin{proof} Suppose that the contradiction \(R\) is {\it not} reduced holds. Then, there is a non-zero element \(a \in R\) such that \(a^2 =0\). With \cite[Theorem 2.1]{3} at hand, there is an idempotent \(e \in RaR\) such that \(eRe \cong {\rm M}_2(T)\) for some non-trivial ring \(T\). Now, Proposition \ref{corner} tells us that $eRe$ is a $W\sqrt{J}U$ ring, and hence ${\rm M}_2(T)$ is a $W\sqrt{J}U$ ring. This, however, contradicts Proposition~\ref{matrix}, as asked.
\end{proof}

\begin{proposition}\label{memeber} Let \( R \) be a $W\sqrt{J}U$ ring. Then

(1) \( 3 \in U(R) \iff 2 \in J(R) \).

(2) \( 2 \in U(R) \iff 3 \in J(R) \).

In particular, if \(3 \in J(R)\), then \({\rm Id}(R) = \{0, 1\}\).
\end{proposition}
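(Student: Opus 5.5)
The key observation is that the integers $2$ and $3$ are central. By Lemma~\ref{1.2}(4), any central element of $\sqrt{J(R)}$ already lies in $J(R)$. So it suffices to write a given unit as $\pm1+j$ and note that $j$ is an integer multiple of $1$, hence central. Everything then reduces to elementary arithmetic.

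For (1), suppose first that $3\in U(R)$. By the $W\sqrt{J}U$ property, $3=1+j$ or $3=-1+j$ with $j\in\sqrt{J(R)}$.
\begin{itemize}
\item In the first case $j=2$. Since $2$ is central, Lemma~\ref{1.2}(4) gives $2\in J(R)$.
\item In the second case $j=4$, again central, so $4\in J(R)$. Then $2^2\in J(R)$, so $2\in\sqrt{J(R)}$, and centrality gives $2\in J(R)$ once more.
\end{itemize}
Conversely, if $2\in J(R)$, then $3=1+2\in 1+J(R)\subseteq U(R)$.

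Part (2) follows the same pattern. Suppose $2\in U(R)$. Then $2=1+j$ with $j=1$, or $2=-1+j$ with $j=3$.
\begin{itemize}
\item The case $j=1$ is impossible: $1\in\sqrt{J(R)}$ would force $1\in J(R)$, so $R=0$. (For the zero ring the statement is trivial.)
\item Hence $j=3$, which is central, and Lemma~\ref{1.2}(4) gives $3\in J(R)$.
\end{itemize}
Conversely, if $3\in J(R)$, then $2=-1+3\in -1+J(R)\subseteq U(R)$.

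For the final claim, let $3\in J(R)$, so $2\in U(R)$ by (2). Take $e\in{\rm Id}(R)$. Then $1-2e$ is a unit, since $(1-2e)^2=1$. Write $1-2e=\pm1+j$ with $j\in\sqrt{J(R)}$.
\begin{itemize}
\item If $1-2e=1+j$, then $j=-2e$. Since $j$ commutes with $e$, Lemma~\ref{1.2}(1) gives $-2^{-1}j=e\in\sqrt{J(R)}$. Then $e=e^n\in J(R)$ for suitable $n$, and an idempotent in $J(R)$ is $0$, so $e=0$.
\item If $1-2e=-1+j$, then $j=2(1-e)$. The same argument applied to the idempotent $1-e$ gives $1-e=0$, so $e=1$.
\end{itemize}

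The only delicate point is this last step. Lemma~\ref{1.2}(1) requires commuting elements, and here $j$ is a polynomial in $e$, so the commutation holds. We also use that $2^{-1}$ is central, being the inverse of a central unit.
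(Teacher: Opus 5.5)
Your proof is correct and follows essentially the same route as the paper: the same case analysis $3=\pm1+j$ and $2=\pm1+j$ combined with Lemma~\ref{1.2}(4), and the same use of the unit $\pm(2e-1)$ for the claim about idempotents. The differences are cosmetic. You show directly that an idempotent in $\sqrt{J(R)}$ is zero, where the paper quotes Saini and Udar for this. In the second case you pass to the idempotent $1-e$, where the paper notes that $e\in 1+\sqrt{J(R)}\subseteq U(R)$.
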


\begin{proof} (1) Since \( 1 + J(R) \subseteq U(R) \) holds always, the implication is immediate.

For the converse, assume \( 3 \in U(R) \). Then, as \( R \) is $W\sqrt{J}U$, we can write either \[ 3 = 1 + j \quad \text{or} \quad 3 = -1 + j \] for some \( j \in \sqrt{J(R)} \). In the first case, \( j = 2 \in \sqrt{J(R)} \) and so $2\in J(R)$ with the aid of Lemma \ref{1.2}(4), and, in the second case, \( j = 4 \in \sqrt{J(R)} \) and hence $4\in J(R)$. It follows now that \( 2 \in J(R) \) in both cases, as pursued.

(2) Suppose \( 3 \in J(R) \). Since \( 1 + J(R) \subseteq U(R) \) is always true, it follows that \( 2 = -1 + 3 \in U(R) \).

Conversely, assume \( 2 \in U(R) \). Then, as \( R \) is $W\sqrt{J}U$, we may write either
\[ 2 = 1 + j \quad \text{or} \quad 2 = -1 + j \] for some \( j \in \sqrt{J(R)} \). The first case ensures \( j=
1 \in \sqrt{J(R)} \), which is a contradiction. The second case insures \( j = 3 \in \sqrt{J(R)} \) and so $3\in J(R)$, as needed.

As for the second part, let \(e\) be an idempotent element of \(R\). Clearly, \(2e - 1\) is a unit, and hence
either \(2e - 1 = j - 1\) or \(2e - 1 = j + 1\). In the first case, \(2e = j\), and thus \(e = 2^{-1}j\in \sqrt{J(R)} \) implying \(e = 0\) in virtue of \cite[Proposition 2.1]{SU}. In the second case, \(2e = j + 2\), so \(e = 2^{-1}j + 1\in 1+ \sqrt{J(R)}\subseteq U(R)
\) guaranteeing that \(e = 1\), as we need.
\end{proof}

\begin{proposition}\label{mino} A ring \(R\) is $\sqrt{J}U$ if and only if \(R\) is $W\sqrt{J}U$ and \(2 \in
J(R)\).
\end{proposition}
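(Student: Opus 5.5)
The plan is to prove the two implications separately, using only the definition, Lemma \ref{1.2} and Proposition \ref{memeber}.

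($\Rightarrow$) Suppose $R$ is $\sqrt{J}U$, so $U(R)=1+\sqrt{J(R)}$. Since $1+\sqrt{J(R)}\subseteq \pm 1+\sqrt{J(R)}$, $R$ is trivially $W\sqrt{J}U$. For $2\in J(R)$, we use that $-1$ is a unit, so $-1=1+j$ with $j\in\sqrt{J(R)}$, that is, $j=-2$. The element $-2$ is central, so Lemma \ref{1.2}(4) gives $-2\in J(R)$, and therefore $2\in J(R)$.

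($\Leftarrow$) Suppose $R$ is $W\sqrt{J}U$ and $2\in J(R)$. Take $u\in U(R)$; then $u=1+j$ or $u=-1+j$ with $j\in\sqrt{J(R)}$. The first case is already of the required form. In the second case we rewrite
\[u=1+(j-2).\]
It remains to show that $j-2\in\sqrt{J(R)}$. This is the only point needing care, because $\sqrt{J(R)}$ need not be closed under addition.

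Here the perturbation $-2$ lies in $J(R)$ and is central, so we pass to $\bar R=R/J(R)$. There $\bar 2=0$, hence $\overline{j-2}=\bar j$. By Lemma \ref{1.2}(5) with $I=J(R)$, we have $\bar j\in\sqrt{J(R)}/J(R)=\sqrt{J(\bar R)}$, so $\bar j^{\,n}\in J(\bar R)=0$ for some $n$. Thus $(j-2)^n\in J(R)$, which gives $j-2\in\sqrt{J(R)}$. Alternatively, one can quote Lemma \ref{1.2}(8) in the general form $\sqrt{J(R)}+J(R)\subseteq\sqrt{J(R)}$, proved by exactly this quotient argument.

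Hence every unit lies in $1+\sqrt{J(R)}$, so $R$ is $\sqrt{J}U$. The only step that is not mere bookkeeping is this closure of $\sqrt{J(R)}$ under adding elements of $J(R)$, and passing modulo $J(R)$ settles it.
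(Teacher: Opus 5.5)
Your proof is correct and is essentially the paper's argument. Sufficiency is the same rewriting $u=-1+j=1+(j-2)$; you usefully make explicit the closure $\sqrt{J(R)}+J(R)\subseteq\sqrt{J(R)}$, which the paper uses without comment (Lemma~\ref{1.2}(8) as stated only covers $\mathrm{Nil}(R)+J(R)$), and it also follows directly from $(j-2)^n\equiv j^n \pmod{J(R)}$, with no need for centrality or Lemma~\ref{1.2}(5). For necessity, you derive $2\in J(R)$ from $-1=1+(-2)$ and Lemma~\ref{1.2}(4) instead of citing Saini--Udar's Proposition~2.5, which is a harmless self-contained substitute.
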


\begin{proof} The necessity follows at once from \cite[Proposition 2.5]{SU}.

For the sufficiency, let \(u \in R\), so we must have \(u = j + 1\) or \(u = j - 1\) for some $j\in \sqrt{J(R)}$. Thus, we write \(u = j - 1 = (j - 2) + 1\), where \(j - 2\in \sqrt{J(R)}\), as required.
\end{proof}

The next claim is crucial.

\begin{lemma}\label{char} Let $R$ be a $W\sqrt{J}U$ ring with $\operatorname{char}(R) > 0$. Then,
$\operatorname{char}(R) = 2^{\alpha}3^{\beta}$ for some $\alpha, \beta \in \mathbb{N}  \cup \left\{ 0 \right\}$.
\end{lemma}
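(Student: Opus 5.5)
Let $n=\operatorname{char}(R)>0$ and suppose, for a contradiction, that some prime $p\ge 5$ divides $n$. Write $n=p^k m$ with $\gcd(p,m)=1$. The prime subring of $R$ is $\mathbb{Z}_n\cong \mathbb{Z}_{p^k}\times\mathbb{Z}_m$, and it is central in $R$. The plan is to show that this central subring still satisfies the $W\sqrt{J}U$ condition strongly enough to force a contradiction.

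Take any integer $a$ with $\gcd(a,n)=1$. Then $a\cdot 1\in U(R)$, since its inverse is again an integer multiple of $1$. By hypothesis $a\cdot 1=\pm 1+j$ with $j=a\mp 1\in\sqrt{J(R)}$. This $j$ is central, so Lemma~\ref{1.2}(4) gives $j\in J(R)$. Hence for every such $a$, either $a-1\in J(R)$ or $a+1\in J(R)$.

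Next we pull this back to $\mathbb{Z}_n$. If $a-1\in J(R)$, then $a-1$ cannot be a unit of $R$, so $a-1$ is not invertible modulo $n$; similarly for $a+1$. (An integer $b$ coprime to $n$ is a unit of $R$, and $J(R)$ contains no units.) So for each $a$ coprime to $n$, at least one of $a-1$, $a+1$ shares a prime factor with $n$.

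Now choose $a$ by the Chinese Remainder Theorem so that this fails:
\begin{itemize}
\item modulo $p^k$, take $a\equiv 2$; then $a$, $a-1\equiv 1$ and $a+1\equiv 3$ are all prime to $p$, because $p\ge5$;
\item modulo each other prime $q\mid m$, we need $a,a\pm1\not\equiv 0 \pmod q$. This is impossible for $q=2$ (one of $a,a+1$ is even) and for $q=3$ (one of $a-1,a,a+1$ is divisible by $3$).
\end{itemize}
So the naive choice must be adjusted, and the primes $2$ and $3$ need a different argument. Handle them as follows.

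\begin{itemize}
\item If $3\mid n$, then $3$ is not a unit, and by Proposition~\ref{memeber}(2) $2$ is not a unit either, hence $2\mid n$. So the relevant case is $6\mid n$.
\item Instead of $\gcd$ conditions, work with the actual $J(R)\cap\mathbb{Z}_n$. It is an ideal of $\mathbb{Z}_n$; set $d=\gcd$ of its generator with $n$.
\item Pass to $\bar R=R/(R\cdot(J(R)\cap\mathbb{Z}_n))$. This quotient is by a central ideal contained in $J(R)$, so Proposition~\ref{1} shows $\bar R$ is still $W\sqrt{J}U$, and $p$ still divides its characteristic unless $J(R)$ contains an element forcing $p$ to be a unit.
\end{itemize}

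A cleaner route is to use the idempotent of $\mathbb{Z}_n$ that splits off the $p$-part. Let $e\in\mathbb{Z}_n\subseteq C(R)$ be the central idempotent with $e\mathbb{Z}_n\cong\mathbb{Z}_{p^k}$. Then $R\cong eR\times(1-e)R$ with $\operatorname{char}(eR)=p^k$. By Proposition~\ref{productt}, $eR$ is $W\sqrt{J}U$. In $eR$ the element $2$ is a unit, so Proposition~\ref{memeber}(2) gives $3\in J(eR)$. Since $\gcd(3,p)=1$, the element $3$ is also a unit of $eR$, and $J(eR)$ contains no units. So $eR=0$, contradicting $\operatorname{char}(eR)=p^k>1$.

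Hence no prime $p\ge5$ divides $n$, and $n=2^\alpha3^\beta$. The main point to verify is the central-idempotent splitting, which holds because $\mathbb{Z}_n\cong\prod\mathbb{Z}_{q^{k_q}}$ has the required idempotents and they are central in $R$. With that in place, the CRT attempt above is unnecessary.
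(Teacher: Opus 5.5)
Your final argument (the ``cleaner route'') is correct, and it is organized differently from the paper's proof. The paper does a global case analysis on $n=\operatorname{char}(R)$:
\begin{itemize}
\item if $n$ is odd, then $2\in U(R)$, so Proposition~\ref{memeber} gives $3\in J(R)$ and $\operatorname{Id}(R)=\{0,1\}$, which forces $n=3^k$;
\item if $n$ is even with $3\nmid n$, then $3\in U(R)$, so $2\in J(R)$ and $R$ is $\sqrt{J}U$ by Proposition~\ref{mino}, and each Chinese-remainder factor must then have $2$-power characteristic;
\item if $6\mid n$, it splits $R$ by the Chinese Remainder Theorem and appeals to Proposition~\ref{productt}.
\end{itemize}
You instead localize at a single prime $p\ge 5$. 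The central idempotent $e$ of the prime subring splits off the $p$-primary factor $eR$. This factor is nonzero because $\mathbb{Z}_n$ embeds in $R$, and in it $2$ and $3$ are both units, so Proposition~\ref{memeber}(2) puts a unit into $J(eR)$.

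Your version is uniform and shorter. It also avoids the paper's compressed last case, which tacitly needs that $R_2=R/3^{\beta}R$ is not $\sqrt{J}U$ because $2$ is a unit there; only then does Proposition~\ref{productt} force every $p_i$-primary factor to be $\sqrt{J}U$ and hence zero. The paper's case analysis, in exchange, records extra information. For example, odd $n$ forces $n=3^k$ with $R$ indecomposable, and $3\nmid n$ forces $R$ itself to be $\sqrt{J}U$.

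When you write this up, delete everything before the cleaner route. Apart from being unnecessary, the bullet ``If $3\mid n$, then \dots\ $2$ is not a unit either, hence $2\mid n$'' is false. Take $R=\mathbb{Z}_3$: there $3\mid n$, $2\in U(R)$ and $3=0\in J(R)$. Proposition~\ref{memeber}(2) says $2\in U(R)\iff 3\in J(R)$, and $3$ being a non-unit does not rule out $3\in J(R)$.

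Also state explicitly why $eR$ inherits the $W\sqrt{J}U$ property. A unit $u$ of $eR$ lifts to the unit $u+(1-e)$ of $R$, and $e\sqrt{J(R)}\subseteq\sqrt{J(eR)}$ since $e$ is central. This matters because mere homomorphic images need not inherit the property, e.g.\ $\mathbb{Z}\to\mathbb{Z}_5$.
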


\begin{proof} Firstly, if $n:=\operatorname{char}(R)$ is odd, then $n-1$ is even and a unit in $R$. This forces $2 \in U(R)$. Hence, viewing Proposition~\ref{memeber}, we have $3 \in J(R)$ and $R$ is indecomposable. Now, if
$\operatorname{char}(R)=p_1^{\alpha_1}\cdots p_k^{\alpha_k}$ and all $p_i \neq 3$, then
$(\operatorname{char}(R),3)=1$ forcing $1 \in J(R)$, a contradiction. Thus, at least one $p_i$ must be $3$.
Since $R$ is indecomposable, utilizing the famous Chinese Remainder Theorem, we obtain $\operatorname{char}(R)=3^k$ for some $k \in \mathbb{N}$.

If now $n:=\operatorname{char}(R)$ is even, then either $n=3k+1$, $n=3k-1$ or $n=3k$ for some odd integer $k \in \mathbb{N}$. If, foremost, $n=3k+1$ or $n=3k-1$, the fact that $n1_R \in J(R)$ gives $3k \in U(R)$, whence $3 \in U(R)$. So, Proposition~\ref{memeber} gives $2 \in J(R)$. Now, taking into account Proposition~\ref{mino}, $R$ has to be a $\sqrt{J}U$ ring. If, however, we write $n=p_1^{\alpha_1}\cdots p_m^{\alpha_m}$, again the Chinese Remainder Theorem employs to deduce $$R \cong R_1 \times \dots \times R_m,$$ where $p_i^{\alpha_i}1_{R_i} = 0$ for all $1 \le i \le m$. But, since $R$ is a $\sqrt{J}U$ ring, each
$R_i$ is also a $\sqrt{J}U$ ring demonstrating that $2 \in J(R_i)$ for all $1 \le i \le m$; consequently, $p_i = 2$ for every $i$. Hence, in this case, $\operatorname{char}(R)=2^{\alpha}$ for some $\alpha \in \mathbb{N}$.

If $n=3k$ and $n$ is even, we can write $n=2^{\alpha}3^{\beta}p_1^{\alpha_1}\cdots p_m^{\alpha_m}$. Once again the Chinese Remainder Theorem is applicable to get that \[ R \cong R_1 \times \dots \times R_m, \] where $R_1 := R/2^{\alpha}R$, $R_2 := R/3^{\beta}R$, and $R_i := R/p_i^{\alpha_i}R$ for each $3 \le i \le m$. Finally, Proposition~\ref{productt} leads to $p_i = 2$ for every $3 \le i \le m$, as asked for.
\end{proof}

\begin{proposition}\label{corner} For any \( e \in {\rm Id}(R) \), if the ring \( R \) is $W\sqrt{J}U$, then
the corner ring \( eRe \) is also $W\sqrt{J}U$.
\end{proposition}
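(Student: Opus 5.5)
The plan is to lift a unit of the corner to a unit of $R$, decompose it there using the $W\sqrt{J}U$ hypothesis, and then compress back to the corner. Two standard facts do most of the work:
\[ J(eRe)=eJ(R)e, \qquad U(eRe)+(1-e)\subseteq U(R). \]
The second holds because if $uv=vu=e$ inside $eRe$, then $(u+1-e)(v+1-e)=1$ and similarly on the other side.

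Fix $u\in U(eRe)$ and set $w:=u+(1-e)\in U(R)$. By hypothesis, $w=\varepsilon+j$ for some $\varepsilon\in\{1,-1\}$ and some $j\in\sqrt{J(R)}$. Multiplying on both sides by $e$ gives
\[ u=ewe=\varepsilon e+eje, \]
so everything reduces to showing $eje\in\sqrt{J(eRe)}$.

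This is the main obstacle, since $\sqrt{J(R)}$ is not closed under multiplication in general. The key observation is that $j$ itself commutes with $e$:
\[ j=w-\varepsilon=u+(1-e)-\varepsilon, \]
and both $u\in eRe$ and $1-e$ commute with $e$. Hence $eje=ej=je$. With $j^n\in J(R)$, we then get
\[ (eje)^n=ej^ne\in eJ(R)e=J(eRe). \]
Therefore $eje\in\sqrt{J(eRe)}$, and $u=\pm e+eje$ is exactly the required decomposition in $eRe$, whose identity is $e$.

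For completeness I would also record the identity $J(eRe)=eJ(R)e$, citing Lam if needed. Alternatively, only the inclusion $eJ(R)e\subseteq J(eRe)$ is used here, and it can be checked directly: for $x\in J(R)$ and $r\in eRe$, the element $e-rexe$ is invertible in $eRe$ because $1-rexe$ is invertible in $R$ and commutes with $e$. Apart from this, no case split is needed beyond the sign $\varepsilon$, and the argument never uses that $\sqrt{J(R)}$ is a subring.
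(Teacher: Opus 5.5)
Your proof is correct and follows essentially the same route as the paper: lift $u$ to $u+(1-e)\in U(R)$, write it as $\pm1+j$, observe that $j$ commutes with $e$, and compress by $e$ to get $u=\pm e+eje$. The differences are minor. You treat both signs at once rather than in two cases, and you verify $eje\in\sqrt{J(eRe)}$ directly via $(eje)^n=ej^ne\in eJ(R)e=J(eRe)$. The paper instead cites the corner identity $e\sqrt{J(R)}e=\sqrt{J(eRe)}$, which as written fails for elements not commuting with $e$ (e.g.\ in ${\rm M}_2(F)$ with $e=e_{11}$), so your explicit computation is actually the cleaner justification.
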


\begin{proof} Letting \(u \in U(eRe)\), we have \(u + (1-e) \in U(R)\). So, we may write, $u + 1-e= 1 + j$ or $u + 1-e= -1 + j$ for some $j\in \sqrt{J(R)}$. In the first case, $$j=u-e\in \sqrt{J(R)}\cap eRe=\sqrt{J(eRe)}$$ bearing in mind \cite[Proposition 2.11]{SU}, so that $u=e+j\in e+\sqrt{J(eRe)}$.

In the remaining case, we multiply both sides of the equation \(u + 1 = -1 + j\) by \(e\) from both the left and the right sides. This automatically gives that \(eu = -e + ej\) and \(ue = -e + je\). It is now easy to
verify that \(eu = ue = u\) giving \(ej = je\). Moreover, we have \[ je = jee = eje \in e\sqrt{J(R)}e =
\sqrt{J(eRe)}, \] where the last equality follows from \cite[Proposition 2.11]{SU}. Therefore, \[ u = -e + eje
\in -e + \sqrt{J(eRe)}, \] which is exactly what we wanted to show.
\end{proof}

\begin{proposition}\label{3.26} The following two items hold:

(1) A division ring \(R\) is $W\sqrt{J}U$ if and only if either \(R \cong \mathbb{Z}_2\) or \(R \cong \mathbb{Z}_3\).

(2) A ring \( R \) is local and $W\sqrt{J}U$ if and only if either \( R/J(R) \cong \mathbb{Z}_2 \) or
\(R/J(R) \cong  \mathbb{Z}_3 \).
\end{proposition}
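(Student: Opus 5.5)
For part (1), I will use that a division ring $R$ has $J(R)=(0)$ and no nonzero nilpotents, so $\sqrt{J(R)}=\{0\}$. Indeed, if $x^n\in J(R)=0$, then $x$ is nilpotent, and a nilpotent element of a division ring is zero. Hence $R$ is $W\sqrt{J}U$ if and only if $U(R)=R\setminus\{0\}\subseteq\{1,-1\}$, that is, $R\setminus\{0\}=\{1,-1\}$ as sets. So $R$ has at most three elements, and the only division rings with two or three elements are $\mathbb{Z}_2$ and $\mathbb{Z}_3$; a division ring of order $n$ is a finite field, so its order is a prime power. For the converse I check directly that $U(\mathbb{Z}_2)=\{1\}$ and $U(\mathbb{Z}_3)=\{1,-1\}$. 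Both are then $W\sqrt{J}U$ with $j=0$.

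For part (2), suppose first that $R$ is local and $W\sqrt{J}U$. By Proposition~\ref{1} with $I=J(R)$, the quotient $R/J(R)$ is $W\sqrt{J}U$. Since $R$ is local, $R/J(R)$ is a division ring, so part (1) gives $R/J(R)\cong\mathbb{Z}_2$ or $R/J(R)\cong\mathbb{Z}_3$. Conversely, if $R/J(R)$ is isomorphic to $\mathbb{Z}_2$ or $\mathbb{Z}_3$, then $R/J(R)$ is a division ring, so $R$ is local. By part (1), $R/J(R)$ is $W\sqrt{J}U$, and Proposition~\ref{1} lifts this property back to $R$.

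There is no real obstacle here. The only point needing care is justifying $\sqrt{J(D)}=\{0\}$ for a division ring $D$, which follows from $J(D)=0$ together with reducedness. Everything else is Proposition~\ref{1} and the elementary count of units.
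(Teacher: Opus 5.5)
Your proof is correct. Part (2) follows the paper's argument: Proposition~\ref{1} with $I=J(R)$, combined with part (1). You also state explicitly that $R/J(R)$ being a division ring forces $R$ to be local, which the paper leaves unsaid.

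For part (1), both arguments start the same way: $\sqrt{J(R)}=(0)$, so every non-zero element equals $\pm 1$. They then diverge. The paper deduces $a^3=a$, invokes Jacobson's commutativity theorem to conclude that $R$ is a field, and bounds $|R^{\ast}|\le 2$ by counting the roots of $1-x^2$. You observe directly that $R=\{0,1,-1\}$, so $|R|\le 3$. Your finish is shorter and fully elementary. The paper's root count is just the same inclusion $R^{\ast}\subseteq\{1,-1\}$ reached by a detour, and the commutativity step adds nothing, because that inclusion already fixes the cardinality.

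Your remark that a finite division ring has prime-power order (which is really Wedderburn's theorem) is also unnecessary. A unital ring of prime order $p$ has additive group generated by $1$, so it is $\mathbb{Z}_p$, and this settles the two- and three-element cases directly.
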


\begin{proof} (1) Since \(R\) is a division ring, one has that $J(R)={\rm Nil}(R)=(0)$ and so $\sqrt{J(R)}=(0)$. Consequently, for any non-zero element $a \in R$, where $R \setminus \{0\} = U(R)$, we obtain $a=\pm 1$, and so $a^2 = 1$ leading to $a^3 = a$. Consulting with the paramount Jacobson's theorem allows us to derive that $R$ is necessarily commutative.

Considering now the polynomial \( f(x) = 1 - x^2 \in R[x] \), one detects that \(f(x)\) can have at most two roots in the multiplicative group \(R^{\ast}\), because \(R\) is a field. Let \(A\) denote the set of all roots of \(f\) in \(R^{\ast}\). Since \(R\) is both a field and a $W\sqrt{J}U$ ring, it follows that, for every non-zero element \(a \in R\), the equality \(a^2 = 1\) holds. Hence, \(R^{\ast} = A\), and therefore \(|R^{\ast}| = |A| \le 2\). Consequently, \(R\) is isomorphic to either one of the finite fields \(\mathbb{Z}_2\) or \(\mathbb{Z}_3\). The converse implication is immediate.

(2) This follows directly from (1) and Proposition~\ref{1}.
\end{proof}

In addition to the definitions of regular and unit-regular rings given above, a ring $R$ is known to be {\it $\pi$-regular} if, for each $a\in R$, $a^n\in a^nRa^n$ for some integer $n\ge 1$. Regular rings are always $\pi$-regular. A ring $R$ is called {\it weakly Boolean} if, for any $a \in R$, either $a$ or $-a$ is an idempotent.

We are now ready to attack the following pivotal result.

\begin{theorem}\label{3.13} Let \(R\) be a ring. Then, the following three assertions are equivalent:

(1) \(R\) is a regular $W\sqrt{J}U$ ring.

(2) \(R\) is a \(\pi\)-regular reduced $W\sqrt{J}U$ ring.

(3) \(R\) is a weakly Boolean ring.
\end{theorem}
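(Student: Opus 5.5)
I will prove the cycle (1)$\Rightarrow$(2)$\Rightarrow$(3)$\Rightarrow$(1).

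\textbf{(1)$\Rightarrow$(2).} Regular rings are $\pi$-regular, so only reducedness needs proof. A regular ring has $J(R)=(0)$. Every non-zero right ideal contains some $a\neq 0$; writing $a=axa$, the element $ax$ is a non-zero idempotent lying in $aR$. Hence Lemma~\ref{reduced} applies, and $R$ is reduced.

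\textbf{(2)$\Rightarrow$(3).} This is the main step, and I would organise it as follows.
\begin{itemize}
\item A reduced ring is abelian. A reduced $\pi$-regular ring is strongly regular: from $a^n=a^nba^n$ and reducedness one gets $a^n R$ generated by a central idempotent, and then a standard argument gives $a\in a^2R$.
\item In a strongly regular ring, every nilpotent is zero, so $J(R)=(0)$. Every element of $\sqrt{J(R)}$ has a power equal to $0$, and reducedness then forces the element itself to be $0$. Thus $\sqrt{J(R)}=(0)$, and the $W\sqrt{J}U$ condition reads $U(R)=\{1,-1\}$.
\item Each $a\in R$ factors as $a=eu$ with $e$ a central idempotent and $u$ a unit. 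Concretely, take $e=ax$ with $axa=a$, $x$ commuting with $a$, and put $u=a+(1-e)$. This $u$ is a unit, with inverse $ex+(1-e)$.
\item Since $u=\pm 1$, we get $a=\pm e$. So either $a$ or $-a$ is idempotent, and $R$ is weakly Boolean.
\end{itemize}

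\textbf{(3)$\Rightarrow$(1).} Here I need regularity and the $W\sqrt{J}U$ property.
\begin{itemize}
\item Regularity: if $a=e$ is idempotent then $aaa=a$, and if $a=-e$ then $a(-1)a=-e=a$. So $R$ is regular.
\item Consequently $J(R)=(0)$.
\item $R$ is reduced: if $a^2=0$ with $a=\pm e$, then $e=e^2=a^2=0$. Hence $\sqrt{J(R)}=(0)$.
\item For the unit condition, take $u\in U(R)$ and suppose $u$ is idempotent. Then $u^2=u$ with $u$ invertible gives $u=1$. If instead $-u$ is idempotent, the same argument gives $u=-1$. So $U(R)=\{\pm1\}=\pm1+\sqrt{J(R)}$, and $R$ is $W\sqrt{J}U$.
\end{itemize}

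The main obstacle is the strong-regularity step in (2)$\Rightarrow$(3). I expect to either cite it as a known fact (a reduced $\pi$-regular ring is strongly regular) or sketch it via the abelian property and the unit $a+(1-e)$.
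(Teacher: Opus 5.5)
Your proposal is correct and follows the paper's proof essentially step for step. You use Lemma~\ref{reduced} for (1)$\Rightarrow$(2), and for (2)$\Rightarrow$(3) you use reduced $\pi$-regular $\Rightarrow$ strongly regular (which the paper cites from Badawi), together with $\sqrt{J(R)}=(0)$ and a factorization $a=eu$; the only difference is that you construct the unit $u=a+(1-e)$ explicitly instead of invoking unit-regularity.

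One small slip: $J(R)=(0)$ holds because strongly regular rings are regular, not because nilpotents vanish, since a reduced ring such as $\mathbb{F}_2[[x]]$ can have non-zero Jacobson radical.
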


\begin{proof} \((1) \Rightarrow (2)\). Since \(R\) is regular, it must be that \(J(R) = (0)\), and every non-zero right ideal contains a non-zero idempotent. So, Lemma \ref{reduced} illustrates that \(R\) is reduced. Also, every regular ring is \(\pi\)-regular.

\((2) \Rightarrow (3)\). Notice that reduced rings are abelian, so \cite[Theorem 3]{badawi} enables us that \(R\) is abelian regular, and hence it is strongly regular. Thus, \(R\) is unit-regular. Likewise, we have \({\rm Nil}(R)=J(R)=\sqrt{J(R)}= 0\). Therefore, for any $u\in U(R)$, we obtain $u=\pm 1$. But, since $R$ is unit-regular, for any \(x \in R\) we can write \(x = ue\) for some \(u \in U(R)\) and \(e \in {\rm Id}(R)\). So, $x=\pm e$. Then, $R$ is weakly Boolean, as claimed.

\((3) \Rightarrow (1)\). It is readily verified that \(R\) is regular. Choosing \(u \in U(R)\), we have \(u = \pm 1\), and thus \(R\) is a $W\sqrt{J}U$ ring, as asserted.
\end{proof}

\begin{corollary}\label{3.14} The following four claims are equivalent for a ring \(R\).

(1)	\(R\) is a regular $W\sqrt{J}U$ ring.

(2) \(R\) is a strongly regular $W\sqrt{J}U$ ring.

(3)	\(R\) is a unit-regular $W\sqrt{J}U$ ring.

(4)	\(R\) is a weakly Boolean ring.
\end{corollary}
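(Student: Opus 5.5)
The plan is to derive Corollary~\ref{3.14} from Theorem~\ref{3.13} using the standard chain of inclusions among regularity conditions. Throughout, the \(W\sqrt{J}U\) hypothesis is carried along unchanged.

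The equivalence \((1)\Leftrightarrow(4)\) is exactly \((1)\Leftrightarrow(3)\) of Theorem~\ref{3.13}, so nothing new is needed there. It then suffices to close the cycle
\[(4)\Rightarrow(2)\Rightarrow(3)\Rightarrow(1).\]

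\((2)\Rightarrow(3)\) and \((3)\Rightarrow(1)\). These are classical facts that do not use the \(W\sqrt{J}U\) condition. A strongly regular ring is abelian regular, and abelian regular rings are unit-regular. A unit-regular ring is regular by definition.

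\((4)\Rightarrow(2)\). By Theorem~\ref{3.13}, a weakly Boolean ring \(R\) is regular and \(W\sqrt{J}U\). It remains to show that \(R\) is strongly regular, and I would do this directly from the weakly Boolean condition rather than through Theorem~\ref{3.13}.
\begin{itemize}
\item Let \(x\in R\). Then \(x=\varepsilon e\) with \(\varepsilon=\pm1\) and \(e\in{\rm Id}(R)\).
\item Hence \(x^2=e\), so \(x=\varepsilon e=\varepsilon e^2=x\cdot x^2\cdot \varepsilon\cdot\varepsilon=x^2(\varepsilon x)\), using \(\varepsilon^2=1\) and that \(\varepsilon\) is central.
\item Thus \(x\in x^2R\), which is the definition of strong regularity.
\end{itemize}
Alternatively, one can show \(R\) is reduced: if \(x^2=0\), then \(x=\pm e\) gives \(e=e^2=x^2=0\), so \(x=0\). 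Reduced regular rings are strongly regular.

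None of these steps is a genuine obstacle. The only point needing care is making sure the classical implications strongly regular \(\Rightarrow\) unit-regular \(\Rightarrow\) regular are quoted correctly. The substantive work has already been done in Theorem~\ref{3.13}.
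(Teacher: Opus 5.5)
Your proposal is correct and takes essentially the paper's route: the corollary is read off from Theorem~\ref{3.13}, whose proof already passes through reduced $\Rightarrow$ strongly regular $\Rightarrow$ unit-regular, combined with the classical implications strongly regular $\Rightarrow$ unit-regular $\Rightarrow$ regular. One small slip in your $(4)\Rightarrow(2)$ computation: $x^2(\varepsilon x)=\varepsilon x^3=e$, not $x$. The chain should instead read $x=\varepsilon e=\varepsilon^3e^3=x^3=x^2\cdot x\in x^2R$; your alternative reducedness argument is fine as written.
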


A ring $R$ is called {\it semi weakly Boolean} if $R/J(R)$ is weakly Boolean and all idempotents of $R$ lift modulo $J(R)$. Analogously, a ring $R$ is called {\it semi regular} if $R/J(R)$ is regular and all idempotents of $R$ lift modulo $J(R)$.

\medskip

We are now paying attention to establish the following main result.

\begin{theorem}\label{3.16} Let \(R\) be a ring. Then, the following three points are equivalent:

(1) \(R\) is a semi regular $W\sqrt{J}U$ ring.

(2) \(R\) is an exchange $W\sqrt{J}U$ ring.

(3) \(R\) is a semi weakly Boolean ring.
\end{theorem}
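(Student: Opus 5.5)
We prove the cyclic chain of implications \((1)\Rightarrow(2)\Rightarrow(3)\Rightarrow(1)\), reducing everything modulo \(J(R)\) and applying Theorem~\ref{3.13} and Corollary~\ref{3.14} to the factor ring \(R/J(R)\).

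\((1)\Rightarrow(2)\) is the standard fact, recalled in the introduction, that semi-regular rings are exchange (see \cite{8}).

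\((2)\Rightarrow(3)\). Let \(R\) be exchange and \(W\sqrt{J}U\).
\begin{itemize}
\item Idempotents lift modulo \(J(R)\), since this holds in every exchange ring (\cite{8}).
\item The factor \(\bar R=R/J(R)\) is again exchange, and it is \(W\sqrt{J}U\) by Proposition~\ref{1} applied with \(I=J(R)\).
\item \(J(\bar R)=0\), and in an exchange ring every right ideal not contained in the Jacobson radical contains a nonzero idempotent. So every nonzero right ideal of \(\bar R\) contains a nonzero idempotent, and Lemma~\ref{reduced} shows that \(\bar R\) is reduced.
\item A reduced ring is abelian, and an abelian exchange ring is clean.
\end{itemize}
It remains to show that \(\bar R\) is weakly Boolean, i.e.\ regular, after which Theorem~\ref{3.13} applies. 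We argue that \(\bar R\) is strongly regular. Reduced exchange rings with zero Jacobson radical are exactly the rings whose prime factors are division rings; this is the step I expect to be the main obstacle, and I would cite a known result for it, e.g.\ that an abelian exchange ring with \(J=0\) is a subdirect product of exchange rings with no nontrivial idempotents, i.e.\ local rings with \(J=0\), i.e.\ division rings. Each such factor ring is a \(W\sqrt{J}U\) division ring, hence \(\mathbb{Z}_2\) or \(\mathbb{Z}_3\) by Proposition~\ref{3.26}(1). Thus \(\bar R\) satisfies \(x^3=x\), since both \(\mathbb{Z}_2\) and \(\mathbb{Z}_3\) do. A ring satisfying \(x^3=x\) is strongly regular, so Corollary~\ref{3.14} gives that \(\bar R\) is weakly Boolean.

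An alternative route, avoiding the subdirect-product citation, uses cleanness directly. Write \(x=u+e\) with \(u\in U(\bar R)\) and \(e\) a central idempotent. Since \(\sqrt{J(\bar R)}=\mathrm{Nil}(\bar R)=0\) (because \(\bar R\) is reduced and \(J(\bar R)=0\)), the \(W\sqrt{J}U\) property forces \(u=\pm1\). Hence \(x=\pm1+e\), and we check each case:
\begin{itemize}
\item \(x=1+e\): then \(x=(1-e)+2e\), where \(2e\) is a unit of the corner \(e\bar R\). Applying the \(W\sqrt{J}U\) property to the corner (Proposition~\ref{corner}) gives \(2e=\pm e\), so \(e=0\) or \(3e=0\). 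In either case one verifies \(x^3=x\).
\item \(x=-1+e\): handled similarly.
\end{itemize}
Therefore \(\bar R\) satisfies \(x^3=x\), is strongly regular, and hence is weakly Boolean by Corollary~\ref{3.14}. I would present this cleanness route as the main argument.

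\((3)\Rightarrow(1)\). Let \(R\) be semi weakly Boolean. Then \(R/J(R)\) is weakly Boolean, so by Theorem~\ref{3.13} it is regular and \(W\sqrt{J}U\). Idempotents lift modulo \(J(R)\) by hypothesis, so \(R\) is semi-regular. Finally, Proposition~\ref{1} lifts the \(W\sqrt{J}U\) property from \(R/J(R)\) back to \(R\).
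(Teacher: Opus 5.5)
Your reduction in $(2)\Rightarrow(3)$ to a reduced, abelian, clean ring $\bar R=R/J(R)$ with $U(\bar R)=\{\pm1\}$ is sound and agrees with the paper up to that point. Your $(1)\Rightarrow(2)$ and $(3)\Rightarrow(1)$ are also fine. The gap is in the case $x=1+e$ of your main argument: nothing makes $2e$ a unit of $e\bar R$. In $\bar R=\mathbb{Z}_2$ the only clean decomposition of $x=0$ is $0=1+1$, so $e=1$ and $2e=0$. Your deduction ``$2e=\pm e$, hence $e=0$ or $3e=0$'' would then give $1=0$. The same failure occurs in $\mathbb{Z}_2\times\mathbb{Z}_3$ with $x=(0,2)=(1,1)+(1,1)$.

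What is missing is the global identity $6=0$ in $\bar R$, which is exactly what turns $(1+e)^3=1+7e$ into $1+e$. You can obtain it with your own tools by applying cleanness to the element $3$. If $3=1+g$ with $g^2=g$, then $g=2$ and $4=2$, so $2=0$. If $3=-1+g$, then $g=4$ and $16=4$, so $12=0$; hence $6^2=36=0$, and $6=0$ because $\bar R$ is reduced. With $6=0$ every $x=1+e$ satisfies $x^3=x$. The case $x=-1+e=-(1-e)$ is trivial. Your conclusion via strong regularity and Corollary~\ref{3.14} then goes through.

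Your backup route also needs repair before it can serve. First, prime factors of a reduced exchange ring with zero radical need not be division rings: the ring $\ell^\infty$ of bounded real sequences is clean, reduced and semiprimitive, but not regular. You must use primitive factors $\bar R/P$ instead. These are prime and abelian exchange (idempotents lift modulo $P$), hence local with zero radical, i.e.\ division rings. Second, you cannot simply assert that each factor is $W\sqrt{J}U$, since the property does not pass to arbitrary quotients ($\mathbb{Z}$ is $W\sqrt{J}U$, $\mathbb{Z}_5$ is not). It does work here: every element of $\bar R$ is $\pm1+e$ with $e$ central, so each division-ring quotient consists of the images of $0,\pm1,2$. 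That forces it to be $\mathbb{Z}_2$ or $\mathbb{Z}_3$.

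For comparison, the paper avoids this computation altogether. Once $\mathrm{Nil}(\bar R)=J(\bar R)=\sqrt{J(\bar R)}=0$, it regards $\bar R$ as an exchange WUU ring. It then quotes the known results that such rings are strongly weakly nil-clean, and that this yields the weakly Boolean structure. Your repaired argument is longer but self-contained.
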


\begin{proof} \((1) \Rightarrow (2)\). It is pretty simple, because semi regular rings are always exchange.

\((2) \Rightarrow (3)\). Observe that \cite[Corollary 2.4]{8} may be applied to deduce that \(R/J(R)\) is exchange and idempotents lift modulo \(J(R)\). Also, by Proposition~\ref {1}, \(R/J(R)\) is $W\sqrt{J}U$. So, without loss of generality, it can be assumed that \(J(R) = (0)\). Thus, since \(R\) is an exchange ring, every non-zero one sided ideal contains a non-zero idempotent. Exploiting Lemma \ref{reduced}, \(R\) is reduced and so abelian. Therefore, one infer that \[ {\rm Nil}(R) = J(R) = \sqrt{J(R)} = (0) .\] But, as \(R\) is $WUU$ and exchange, \cite[Corollary 2.15]{wuu} informs us that $R$ is a strongly weakly nil-clean ring. Finally, the statement concludes from \cite[Theorem 3.2]{ch}.
	
\((3) \Rightarrow (1)\). Since $R/J(R)$ is weakly Boolean, it is easily inspected that $R/J(R)$ is simultaneously regular and $W\sqrt{J}U$. Hence, $R$ is both semi regular and $W\sqrt{J}U$ with the usage of Proposition~\ref{1}.
\end{proof}

Three more consequences appear.

\begin{corollary}\label{3.17} Let \( R \) be a $W\sqrt{J}U$ ring. Then, the following are equivalent:

(1) \( R \) is a semi regular ring.

(2) \( R \) is an exchange ring.

(3) \( R \) is a clean ring.
\end{corollary}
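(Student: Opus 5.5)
The plan is to use Theorem~\ref{3.16} for the two nontrivial implications and standard facts relating clean and exchange rings for the rest.

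\textbf{(1) $\Leftrightarrow$ (2).} Since $R$ is $W\sqrt{J}U$ by hypothesis, this is exactly the equivalence of points (1) and (2) in Theorem~\ref{3.16}.

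\textbf{(3) $\Rightarrow$ (2).} Every clean ring is exchange, as recalled in the introduction from \cite{8}. So this implication needs no use of the $W\sqrt{J}U$ hypothesis.

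\textbf{(2) $\Rightarrow$ (3).} This is the only step with content. By Theorem~\ref{3.16}, an exchange $W\sqrt{J}U$ ring is semi weakly Boolean. Hence $R/J(R)$ is weakly Boolean and idempotents lift modulo $J(R)$. I would then argue in one of two ways.
\begin{itemize}
\item \emph{Via abelianness.} A weakly Boolean ring $\bar R=R/J(R)$ is reduced: it is regular, and it is $W\sqrt{J}U$ with zero radical, so Lemma~\ref{reduced} applies. Reduced rings are abelian, and abelian exchange rings are clean. Since $R$ is exchange, $\bar R$ is exchange, so $\bar R$ is clean. Cleanness then lifts to $R$ by the standard result of \cite{8}: if $R/J(R)$ is clean and idempotents lift modulo $J(R)$, then $R$ is clean.
\item \emph{Directly.} Write $\bar a=\pm\bar e$ with $\bar e$ idempotent, and lift $\bar e$ to an idempotent $f\in R$. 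If $\bar a=\bar e$, then $a=f+j$ with $j\in J(R)$, so $a=(1-f)+\bigl(2f-1+j\bigr)$. Here $1-f$ is idempotent and $2f-1+j$ is a unit, because $(2f-1)^2=1$ and $J(R)$ is closed under adding to units. If $\bar a=-\bar e$, then $a=-f+j$, so $a=f+\bigl(-2f+j\bigr)$. In this case one needs $-2f+j$ to be a unit, which amounts to $2\in U(fRf)$; that fails when $2\in J$.
\end{itemize}
The direct route therefore needs a case split on the sign, and in the second case some care. For this reason I would rely primarily on the first route, where every ingredient is a standard cited fact.

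\textbf{Main obstacle.} The delicate point is lifting cleanness from $R/J(R)$ to $R$. The cleanest path is to avoid it altogether: by \cite{8}, for an exchange ring, being clean is equivalent to $R/J(R)$ being clean with idempotents lifting. Alternatively, one can invoke the known equivalence ``exchange $\Leftrightarrow$ clean'' for rings whose factor ring $R/J(R)$ is abelian. Either way, the key inputs are that $R/J(R)$ is weakly Boolean, hence reduced and abelian, and that idempotents lift.
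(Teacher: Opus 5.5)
Your proof is correct. For (1)$\Leftrightarrow$(2) and (3)$\Rightarrow$(2) it matches the paper, but for (2)$\Rightarrow$(3) your route differs, and it is the sound one.

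\textbf{The paper's argument for (2)$\Rightarrow$(3).} The paper asserts in one line that $R$ itself is reduced by Lemma~\ref{reduced}, hence abelian, and then uses that abelian exchange rings are clean. That step is not justified, because Lemma~\ref{reduced} requires $J(R)=(0)$. In fact ${\rm T}_2(\mathbb{Z}_2)$ is a counterexample to the intermediate claim. Its units are $I+be_{12}$ with $be_{12}\in J$, so it is JU and hence $W\sqrt{J}U$. Being finite, it is exchange. Yet it is neither reduced nor abelian.

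\textbf{Your argument.} You apply the reduced/abelian reasoning only to $R/J(R)$, where Lemma~\ref{reduced} genuinely applies. You then return to $R$ using the idempotent lifting supplied by Theorem~\ref{3.16}. This is exactly what makes the implication work.

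\textbf{Two smaller remarks.}

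First, the lifting step you call the ``main obstacle'' is routine. If $\bar a=\bar e+\bar u$ and $f$ is an idempotent lifting $\bar e$, then $a-f$ is a unit modulo $J(R)$. Hence $a-f$ is a unit of $R$, so no citation is needed.

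Second, your ``direct route'' does not actually fail in the second case; you only chose the wrong split. Write
\[
-f+j=(1-f)+(j-1),
\]
where $j-1\in -1+J(R)\subseteq U(R)$. Combined with
\[
f+j=(1-f)+(2f-1+j)
\]
for the first case, this gives a fully elementary proof of (2)$\Rightarrow$(3) from Theorem~\ref{3.16} alone. It needs neither Lemma~\ref{reduced}, nor abelianness, nor the fact that abelian exchange rings are clean.
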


\begin{proof} The truthfulness of Theorem \ref{3.16} is a guarantor that (1) \(\Leftrightarrow\) (2). 	

\((3) \Rightarrow (2)\). This is straightforward. 	

\((2) \Rightarrow (3)\). If \( R \) is exchange $W\sqrt{J}U$, then \( R \) is reduced by Lemma \ref{reduced} and hence it is abelian. Therefore, \( R \) is abelian exchange, so it is clean.
\end{proof}

\begin{corollary}\label{3.18} Let \( R \) be a ring. Then, the following are equivalent:

(1) \( R \) is an exchange $W\sqrt{J}U$ ring.

(2) \( R \) is a strongly weakly-nil-clean ring.
\end{corollary}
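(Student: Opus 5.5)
The plan is to deduce both implications from Theorem~\ref{3.16}, with the structure theorem for strongly weakly nil-clean rings from \cite{ch} (and \cite{wuu}) providing the link.

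For $(1)\Rightarrow(2)$, let $R$ be an exchange $W\sqrt{J}U$ ring. By Theorem~\ref{3.16}, $R$ is semi weakly Boolean: $\bar R:=R/J(R)$ is weakly Boolean and idempotents lift modulo $J(R)$. A weakly Boolean ring satisfies $x^3=x$ for all $x$. It is therefore reduced and commutative, and by the standard decomposition it is Boolean, or $\mathbb{Z}_3$, or $B\times\mathbb{Z}_3$ with $B$ Boolean.

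The remaining task is to show that $J(R)$ is nil. Granting that, \cite[Theorem 3.2]{ch} (equivalently \cite[Corollary 2.15]{wuu}) identifies rings with nil $J(R)$ and $R/J(R)$ of this shape, with idempotents lifting, as exactly the strongly weakly nil-clean rings. I expect nilness of $J(R)$ to be the main obstacle. I would try two routes:
\begin{itemize}
\item Show that $R$ is WUU. Since $R/J(R)$ is weakly Boolean, every unit $u$ satisfies $u^2-1\in J(R)$. Write $u=\pm1+j$ with $j\in\sqrt{J(R)}$; then one of $\pm u-1$ lies in $J(R)$.
\item Use the characterization in \cite[Theorem 3.2]{ch} directly, which may only require $R$ to be semi weakly Boolean with the appropriate unit condition. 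This is the proof of Theorem~\ref{3.16} read backwards.
\end{itemize}
If neither yields nilness, I would extract it from the defining condition $U(R)=\pm1+\sqrt{J(R)}$. If instead $J(R)$ need not be nil, the intended notion of strongly weakly nil-clean in \cite{ch} must be the $J$-version, and the citation is the one that closes the argument.

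For $(2)\Rightarrow(1)$, let $R$ be strongly weakly nil-clean. By \cite{KZ} and \cite{ch}, $R$ is strongly clean, hence exchange. Moreover, $J(R)$ is nil and $R/J(R)$ is Boolean, $\mathbb{Z}_3$, or $B\times\mathbb{Z}_3$.
\begin{itemize}
\item Each of these quotients is weakly Boolean.
\item Idempotents lift modulo the nil ideal $J(R)$.
\end{itemize}
So $R$ is semi weakly Boolean, and Theorem~\ref{3.16}, $(3)\Rightarrow(2)$, gives that $R$ is exchange $W\sqrt{J}U$. Alternatively, I can argue directly. For $u\in U(R)$, its image in $R/J(R)$ is a unit of a ring in which every unit is $\pm1$, because for $B\times\mathbb{Z}_3$ the only units are $(1,\pm1)$, i.e. $\pm$ of $(1,\pm1)$. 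Hence $u\mp1\in J(R)\subseteq\sqrt{J(R)}$. Either way, Proposition~\ref{1} gives the $W\sqrt{J}U$ property.
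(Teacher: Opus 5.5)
\textbf{Verdict: genuine gap, and it cannot be closed.} Your $(2)\Rightarrow(1)$ is fine and matches the paper in substance. The paper passes through exchange WUU and ${\rm Nil}(R)\subseteq\sqrt{J(R)}$; your direct computation, that every unit of $B$, $\mathbb{Z}_3$ or $B\times\mathbb{Z}_3$ is $\pm1$, is an equally valid variant.

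The gap is in $(1)\Rightarrow(2)$, at exactly the step you flagged: nilness of $J(R)$ cannot be derived because it is false. Take $R=\mathbb{F}_2[[x]]$ (or $\mathbb{Z}_{(2)}$, or $\mathbb{Z}_3[[x]]$ for the $\mathbb{Z}_3$ case).
\begin{itemize}
\item $R$ is local, hence exchange.
\item $U(R)=1+J(R)\subseteq\pm1+\sqrt{J(R)}$, so $R$ is an exchange $W\sqrt{J}U$ ring.
\item $R$ is a domain with ${\rm Id}(R)=\{0,1\}$. So $x=\pm e+n$ with $e$ idempotent and $n$ nilpotent would force $x\in\{0,\pm1\}$, which is impossible.
\end{itemize}
More generally, every strongly weakly nil-clean ring has nil Jacobson radical. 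Let $a\in J(R)$ with $a=\pm e+n$ and $en=ne$. Then $n$ commutes with $a$, and $1-e=(1\mp a)\pm n$ is a unit plus a commuting nilpotent. Hence $1-e$ is a unit idempotent, so $e=0$ and $a=n$ is nilpotent.

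None of your three routes can therefore succeed:
\begin{itemize}
\item Route one only gives $u\mp1\in J(R)$, which is the WJU property, not $u\mp1\in{\rm Nil}(R)$. The unit $1+x$ shows $\mathbb{F}_2[[x]]$ is not WUU.
\item Route two, Theorem~\ref{3.16} read backwards, only sees $R/J(R)$ and lifting of idempotents. Semi weakly Boolean rings such as $\mathbb{F}_2[[x]]$ need not have nil radical.
\item The condition $U(R)=\pm1+\sqrt{J(R)}$ puts no constraint on how large $J(R)\subseteq\sqrt{J(R)}$ is.
\end{itemize}
Your closing hedge, reinterpreting ``strongly weakly nil-clean'' as a $J$-version, changes the statement rather than proving it.

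What can actually be proved is that an exchange $W\sqrt{J}U$ ring is semi weakly Boolean. The equivalence with strong weak nil-cleanness holds only under the extra hypothesis that $J(R)$ is nil.

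The paper's own one-line proof (``similar to Theorem~\ref{3.16}'') has the same defect. The reduction to $J(R)=(0)$ there is harmless for a property of $R/J(R)$, but it discards exactly the nilness needed here. Indeed, the paper's own example $\mathbb{F}_2[[x]]$, a local and hence exchange $W\sqrt{J}U$ ring that is not WUU, contradicts both this corollary and Corollary~\ref{3.19}.
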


\begin{proof} \((1) \Rightarrow (2)\). The proof is rather similar to the proof of Theorem~\ref{3.16}, so we drop off details. 	

\((2) \Rightarrow (1)\). If \( R \) is strongly weakly-nil-clean, then it is exchange $WUU$ by
\cite[Corollary 2.15]{wuu}. Since ${\rm Nil}(R)\subseteq \sqrt{J(R)}$, we can conclude that $R$ is a $W\sqrt{J}U$ ring, as required.
\end{proof}

\begin{corollary}\label{3.19} Let \( R \) be an exchange ring. The following are equivalent:

(1) \( R \) is a $W\sqrt{J}U$ ring.

(2) \( R \) is a $WUU$ ring.
\end{corollary}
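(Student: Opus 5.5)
The direction (2) $\Rightarrow$ (1) is immediate from ${\rm Nil}(R)\subseteq\sqrt{J(R)}$ (Lemma \ref{1.2}(8)). Indeed, if $U(R)={\rm Nil}(R)\pm 1$, then every unit has the form $\pm1+j$ with $j\in\sqrt{J(R)}$. This direction does not even use the exchange hypothesis.

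For (1) $\Rightarrow$ (2), let $R$ be exchange and $W\sqrt{J}U$. By Corollary \ref{3.18}, $R$ is strongly weakly nil-clean. Then \cite[Corollary 2.15]{wuu}, already quoted in the proof of Corollary \ref{3.18}, says that strongly weakly nil-clean rings are exactly the exchange $WUU$ rings. Hence $R$ is $WUU$.

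The only substantive point is in Corollary \ref{3.18}(1)$\Rightarrow$(2), whose proof the paper abbreviates, so I would make it explicit. By Theorem \ref{3.16}, $R/J(R)$ is weakly Boolean and idempotents lift modulo $J(R)$. One then needs $J(R)$ to be nil in order to land in Danchev's characterization: $J(R)$ nil and $R/J(R)$ isomorphic to $B$, $\mathbb{Z}_3$ or $B\times\mathbb{Z}_3$.

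This nil-ness of $J(R)$ is the main obstacle, since a general $W\sqrt{J}U$ ring need not have nil Jacobson radical. If Corollary \ref{3.18} is taken as given, as the paper states it, the argument is already complete. Otherwise I would fall back on proving directly that every unit is $\pm1+$nilpotent. Write $u=\pm1+j$ and work modulo $J(R)$, using that $R/J(R)$ is weakly Boolean, hence reduced, so that $\sqrt{J(R)}=J(R)$. I would then try to extract nilpotency from strong cleanness and the exchange property. This route is where I expect difficulty, so my primary plan is to invoke Corollary \ref{3.18} together with \cite[Corollary 2.15]{wuu}.
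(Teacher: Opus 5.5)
There is a genuine gap, and it cannot be closed, because the statement itself is false. Your direction (2)$\Rightarrow$(1) is fine, and you located the crux of (1)$\Rightarrow$(2) correctly: everything hinges on $J(R)$ being nil. That need not hold.

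\textbf{Counterexample.} Take $R=\mathbb{F}_2[[x]]$.
\begin{itemize}
\item $R$ is local, hence exchange.
\item $U(R)=1+xR=1+J(R)\subseteq 1+\sqrt{J(R)}$, so $R$ is $\sqrt{J}U$, in particular $W\sqrt{J}U$.
\item $R$ is a domain of characteristic $2$, so ${\rm Nil}(R)\pm 1=\{1\}$. Since $1+x\in U(R)$, $R$ is not $WUU$.
\end{itemize}
This is exactly the ring the paper uses in its Example to separate $W\sqrt{J}U$ from $WUU$. The ring $\mathbb{Z}_{(2)}$ with the unit $3$ works equally well.

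\textbf{Consequences for your two routes.} The same ring refutes Corollary~\ref{3.18}(1)$\Rightarrow$(2), which your primary plan takes as given. Its only idempotents are $0,1$ and its only nilpotent is $0$, so $x$ is not of the form $\pm e+q$; hence $R$ is not strongly weakly nil-clean. Your fallback fails precisely at the step you flagged. You do correctly get $u=\pm1+j$ with $j\in J(R)$, since $R/J(R)$ is weakly Boolean and hence reduced. But here $j=x$ is not nilpotent, and no use of strong cleanness or the exchange property can change that.

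\textbf{The paper's proof.} It has the same defect. It asserts that Theorem~\ref{3.16} yields ${\rm Nil}(R)=J(R)=\sqrt{J(R)}=(0)$. The proof of Theorem~\ref{3.16} obtains this only for $R/J(R)$, after the reduction ``without loss of generality $J(R)=(0)$''.

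\textbf{What is true.} By Theorem~\ref{m}, valid for any ring, $R$ is $W\sqrt{J}U$ if and only if $R/J(R)$ is $WUU$. Moreover, (1)$\Leftrightarrow$(2) holds once you add the hypothesis that $J(R)$ is nil. In that case $x^n\in J(R)$ forces $x$ to be nilpotent, so $\sqrt{J(R)}={\rm Nil}(R)$ and the two conditions coincide.
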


\begin{proof} \((2) \Rightarrow (1)\). It is pretty clear, because ${\rm Nil}(R)\subseteq \sqrt{J(R)}$ always.

\((1) \Rightarrow (2)\). If \( R \) is exchange and $W\sqrt{J}U$, then ${\rm Nil}(R)=J(R)=\sqrt{J(R)}=(0)$ follows from Theorem~\ref{3.16}. Hence, $R$ is $WUU$, as requested.
\end{proof}

We now have all the instruments necessary to prove the following a bit curious affirmation.

\begin{theorem}\label{m} Let $R$ be a ring. Then, $R$ is a $W\sqrt{J}U$ ring if and only if $R/J(R)$ is a
$WUU$ ring.
\end{theorem}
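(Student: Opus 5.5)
The plan is to reduce everything to the quotient $\bar R := R/J(R)$ using Proposition~\ref{1}, and then to observe that in a ring with zero Jacobson radical the set $\sqrt{J}$ collapses to the nilpotent elements.

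\textbf{Step 1.} Apply Proposition~\ref{1} with $I = J(R)$. This shows that $R$ is $W\sqrt{J}U$ if and only if $\bar R$ is $W\sqrt{J}U$. So it suffices to prove that $\bar R$ is $W\sqrt{J}U$ exactly when $\bar R$ is $WUU$.

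\textbf{Step 2.} Compute $\sqrt{J(\bar R)}$. Since $J(\bar R) = (0)$, the definition gives
\[
\sqrt{J(\bar R)} = \{x \in \bar R : x^n = 0 \text{ for some } n \geq 1\} = {\rm Nil}(\bar R).
\]
The same conclusion also follows from Lemma~\ref{1.2}(5), namely $\sqrt{J(\bar R)} = \sqrt{J(R)}/J(R)$, combined with the trivial inclusion ${\rm Nil}(\bar R) \subseteq \sqrt{J(\bar R)}$.

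\textbf{Step 3.} Substitute Step~2 into the definitions. The condition $U(\bar R) = \pm 1 + \sqrt{J(\bar R)}$ becomes $U(\bar R) = \pm 1 + {\rm Nil}(\bar R)$, which is precisely the $WUU$ condition for $\bar R$. Combined with Step~1, this yields the equivalence in Theorem~\ref{m}.

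There is essentially no obstacle here. The only point needing care is the lifting of units and of the decompositions between $R$ and $\bar R$. This is handled inside Proposition~\ref{1}: units of $R$ map onto units of $\bar R$, and every preimage of a unit of $\bar R$ is a unit of $R$ because $J(R)$ consists of quasi-regular elements. Moreover, a decomposition $\bar u = \pm \bar 1 + \bar j$ with $\bar j \in \sqrt{J(\bar R)}$ lifts to $u = \pm 1 + j'$ with $j' \in \sqrt{J(R)}$ by Lemma~\ref{1.2}(8). Hence I would simply quote Proposition~\ref{1} and Lemma~\ref{1.2}(5) and not redo those arguments.
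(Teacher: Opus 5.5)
Your proof is correct and is essentially the paper's argument, just organized differently: the paper redoes the lifting of units and decompositions between $R$ and $R/J(R)$ directly, using that $j\in\sqrt{J(R)}$ exactly when $j+J(R)$ is nilpotent in $R/J(R)$, whereas you hand that lifting to Proposition~\ref{1} with $I=J(R)$ and then note that $\sqrt{J(\bar R)}={\rm Nil}(\bar R)$ because $J(\bar R)=(0)$. One small correction, which applies to the paper's proof as well: the absorption step $\sqrt{J(R)}+J(R)\subseteq\sqrt{J(R)}$ is really justified by $(q+j)^n\in q^n+J(R)$ for $j\in J(R)$, not by Lemma~\ref{1.2}(8), which only covers ${\rm Nil}(R)+J(R)$.
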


\begin{proof} Suppose that $R$ is $W\sqrt{J}U$ and choose $u+J(R)\in U(R/J(R))$. So, $u\in U(R)$ and hence we write $u=\pm 1+j$, where $j\in \sqrt{J(R)}$. Thus, $$u+J(R)=\pm (1+J(R)) + (j+J(R)),$$ where $j+J(R)\in
{\rm Nil}(R/J(R))$; in fact, since $j\in \sqrt{J(R)}$, we have $j^n\in J(R)$ for some $n$.

Conversely, assume that $R/J(R)$ is $WUU$ and choose $u\in U(R)$. Hence, $u+J(R)\in U(R/J(R))$. Then, $$u+J(R)=\pm(1+J(R))+(q+J(R)),$$ where $q+J(R)\in {\rm Nil}(R/J(R))$ whence $(q+J(R))^n=J(R)$ for some $n$. Therefore, $q^n\in J(R)$, i.e., $q\in \sqrt{J(R)}$. On the other hand, $u\pm(1+q)\in J(R)$ and so $u=\pm1+q+j$, where $j\in J(R)$. But, $q+j\in \sqrt{J(R)}$ adapting Lemma~\ref {1.2}(8). Consequently, $R$ is $W\sqrt{J}U$, as needed.
\end{proof}

Two next consequences sound thus:

\begin{corollary} Let $R$ be a ring with $J(R)=(0)$. Then, $R$ is a $W\sqrt{J}U$ ring if and only if $R$ is a
$WUU$ ring.
\end{corollary}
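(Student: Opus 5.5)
This is an immediate specialization of Theorem~\ref{m}. The plan is to apply that theorem and observe that the hypothesis $J(R)=(0)$ makes $R/J(R)$ canonically isomorphic to $R$ itself.

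Concretely, assume $J(R)=(0)$. Then the canonical projection $R\to R/J(R)$ is a ring isomorphism. Both the $W\sqrt{J}U$ property and the $WUU$ property are defined purely in terms of the units, the identity, nilpotents and the Jacobson radical, so each is invariant under ring isomorphism. Theorem~\ref{m} states that $R$ is $W\sqrt{J}U$ if and only if $R/J(R)$ is $WUU$, and the latter holds if and only if $R$ is $WUU$.

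As a sanity check, the equivalence can also be verified directly. When $J(R)=(0)$ we have
\[
\sqrt{J(R)}=\{x\in R: x^n=0 \text{ for some } n\ge 1\}={\rm Nil}(R).
\]
Hence the defining condition $U(R)=\pm1+\sqrt{J(R)}$ reads literally $U(R)=\pm1+{\rm Nil}(R)$, which is exactly the $WUU$ condition. This one-line observation could itself serve as the entire proof, bypassing Theorem~\ref{m}.

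There is no real obstacle here. The only point needing explicit mention is the identification $\sqrt{(0)}={\rm Nil}(R)$, which follows immediately from the definition of $\sqrt{J(R)}$.
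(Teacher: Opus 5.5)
Your proposal is correct and matches the paper, which gives no separate proof and presents this corollary as an immediate consequence of Theorem~\ref{m}, using $R/J(R)\cong R$ when $J(R)=(0)$. Your direct observation that $J(R)=(0)$ forces $\sqrt{J(R)}={\rm Nil}(R)$ is also a valid proof on its own: it makes the two defining conditions literally identical.
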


\begin{corollary} Let $R$ be a ring with $J(R)$ nil. Then, $R$ is a $W\sqrt{J}U$ ring if and only if $R$ is a
$WUU$ ring.
\end{corollary}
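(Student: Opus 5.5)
The statement is the second corollary: if $J(R)$ is nil, then $R$ is $W\sqrt{J}U$ if and only if $R$ is $WUU$. I would derive it from Theorem~\ref{m} combined with Proposition~\ref{1}, or more directly by comparing $\sqrt{J(R)}$ with ${\rm Nil}(R)$.

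The key observation is that when $J(R)$ is nil, $\sqrt{J(R)}={\rm Nil}(R)$. Indeed, ${\rm Nil}(R)\subseteq\sqrt{J(R)}$ always holds by Lemma~\ref{1.2}(8). Conversely, if $x\in\sqrt{J(R)}$, then $x^n\in J(R)$ for some $n$. Since $J(R)$ is nil, $(x^n)^m=0$ for some $m$, so $x$ is nilpotent. Hence
\[ U(R)=\pm1+\sqrt{J(R)} \iff U(R)=\pm1+{\rm Nil}(R), \]
which is exactly the equivalence of the $W\sqrt{J}U$ and $WUU$ conditions.

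As a cross-check, one could also go through Theorem~\ref{m}: $R$ is $W\sqrt{J}U$ if and only if $R/J(R)$ is $WUU$. One would then need that, for nil $J(R)$, $R$ is $WUU$ exactly when $R/J(R)$ is. That requires nilpotents to lift modulo a nil ideal: if $q^n\in J(R)$, then $q$ is nilpotent. This is the same computation as above, so the direct argument is cleaner and I would present that one.

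There is no real obstacle. The only point needing care is the equality $\sqrt{J(R)}={\rm Nil}(R)$, and specifically that "$x^n$ nilpotent" implies "$x$ nilpotent", which is immediate.
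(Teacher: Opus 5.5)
Your proof is correct, and it takes a slightly more direct route than the paper's. The paper gives no separate argument; it lists the corollary as a consequence of Theorem~\ref{m}, i.e.\ $R$ is $W\sqrt{J}U$ if and only if $R/J(R)$ is $WUU$.

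To finish along that route, one must still transfer the $WUU$ property between $R/J(R)$ and $R$, and this is where nilness of $J(R)$ is used. Going down is automatic. Going up, a unit $u\in R$ decomposes as $u=\pm1+q+j$ with $q^n\in J(R)$ and $j\in J(R)$. One then needs $q+j$ to be nilpotent. This holds because $(q+j)^n\equiv q^n \pmod{J(R)}$, so $(q+j)^n\in J(R)$, which is nil. You anticipated exactly this in your cross-check.

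Your observation that $\sqrt{J(R)}={\rm Nil}(R)$ whenever $J(R)$ is nil makes the two defining conditions literally the same equation. So you avoid Theorem~\ref{m} and any lifting of units or nilpotents, and the nil hypothesis enters exactly once. The paper's framing instead presents the result as a special case of its quotient characterization, which is mainly what that placement is meant to show.
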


The following construction reveals the difficulty in obtaining relationships between the studied classes of rings.

\begin{example} (1) Every $WUJ$ ring is necessarily a $W\sqrt{J}U$ ring. However, the reverse relation
does {\it not} always hold. Indeed, let $R$ be the $\mathbb{F}_2$-algebra generated by $x,y$ satisfying $x^2=0$. Then, $R$ is $W\sqrt{J}U$, but it is {\it not} $WUJ$, because $J(R)=(0)$.

(2) It is not too hard to see that every $WUU$ ring is a $W\sqrt{J}U$ ring. However, the opposite relation does {\it not} necessarily hold. To substantiate this, consider $R := \mathbb{F}_2[[x]]$. So, $R$ is a $W\sqrt{J}U$ ring that is not a $WUU$ ring, because $J(R)$ is not nil (cf. \cite{wuu}).
\end{example}

As usual, let ${\rm Nil}_{*}(R)$ denote the {\it prime radical} (or, in other words, the {\it lower nil-radical}) of a ring $R$, i.e., the intersection of all prime ideals of $R$. We know that ${\rm Nil}_{*}(R)$ is a nil-ideal of $R$. It is long known that a ring $R$ is called {\it $2$-primal} if its lower nil-radical ${\rm Nil}_{*}(R)$ consists precisely of all the nilpotent elements of $R$, that is, ${\rm Nil}_{*}(R)={\rm Nil}(R)$. For instance, it is well known that both reduced rings and commutative rings are  $2$-primal. For an
endomorphism $\alpha$ of a ring $R$, $R$ is called {\it $\alpha$-compatible} if, for any $a,b\in R$,
$ab=0\Longleftrightarrow a\alpha (b)=0$ and, in this case, $\alpha$ is apparently injective.

\medskip

We are now having at disposal all the ingredients necessary to establish the following surprising statement.

\begin{theorem} Let \( R \) be a \(2\)-primal ring, and let \( \alpha \) be an endomorphism of \( R \). If \( R \) is \( \alpha \)-compatible, then the following four issues are equivalent:

(1) $R$ is a $WUU$ ring. 	

(2) $R[x; \alpha]$ is a $W\sqrt{J}U$ ring. 	

(3) $R[x; \alpha]$ is a $WJU$ ring.
	
(4) $R[x; \alpha]$ is a $WUU$ ring.
\end{theorem}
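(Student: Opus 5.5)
We would use the standard structure theory of skew polynomial rings over $2$-primal $\alpha$-compatible rings. For such $R$, it is known that ${\rm Nil}_*(R)={\rm Nil}(R)$ and ${\rm Nil}(R[x;\alpha])={\rm Nil}(R)[x;\alpha]$. It is also known that $J(R[x;\alpha])={\rm Nil}(R)[x;\alpha]$, the analogue of Amitsur's theorem, obtainable e.g. from the results of Chen--Yang--Zhou or Hong--Kim--Kwak on $\alpha$-compatible $2$-primal rings. Finally, $f=\sum a_ix^i\in U(R[x;\alpha])$ if and only if $a_0\in U(R)$ and $a_i\in {\rm Nil}(R)$ for all $i\ge1$. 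In particular $J(R[x;\alpha])={\rm Nil}(R[x;\alpha])$ is nil, so the last corollary preceding the statement (with $J$ nil, $W\sqrt{J}U \iff WUU$) gives (2)$\iff$(4) at once. Moreover, since $J$ is nil we have $\sqrt{J(R[x;\alpha])}={\rm Nil}(R[x;\alpha])$, as $f^n\in J$ forces $f$ nilpotent.

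Next we show (1)$\Rightarrow$(3). Let $f=\sum_{i\ge0}a_ix^i$ be a unit of $R[x;\alpha]$. Then $a_0\in U(R)$, so by (1) $a_0=\pm1+q$ with $q\in{\rm Nil}(R)$. Hence $f=\pm1+\bigl(q+\sum_{i\ge1}a_ix^i\bigr)$, and the bracket lies in ${\rm Nil}(R)[x;\alpha]=J(R[x;\alpha])$. Thus $R[x;\alpha]$ is $WJU$. The implication (3)$\Rightarrow$(2) is trivial, since $J\subseteq\sqrt{J}$ (equally, $WJU\Rightarrow W\sqrt{J}U$ as noted in the example above).

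For (4)$\Rightarrow$(1) (or (2)$\Rightarrow$(1)), observe that $R$ is a unital subring of $R[x;\alpha]$ and is rationally closed. Indeed, a unit of $R[x;\alpha]$ lying in $R$ has an inverse whose constant term inverts it, and $R$ is reduced modulo ${\rm Nil}(R)$, so the inverse lies in $R$. Alternatively, $R\cong R[x;\alpha]/xR[x;\alpha]$ when $xR[x;\alpha]$ is an ideal; it is an ideal since $xr=\alpha(r)x$. So $R$ is a homomorphic image of $R[x;\alpha]$, and images of $WUU$ rings are $WUU$ because units lift here: a unit $a$ of $R$ is a unit of $R[x;\alpha]$ mapping to itself. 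Writing $a=\pm1+g$ with $g$ nilpotent and projecting to constant terms gives $a=\pm1+g_0$, where $g_0\in{\rm Nil}(R)$ because ${\rm Nil}(R[x;\alpha])={\rm Nil}(R)[x;\alpha]$. This proves (1).

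The main obstacle is justifying the three structural facts about $R[x;\alpha]$: the description of its units, of its nilpotents, and of $J(R[x;\alpha])={\rm Nil}(R)[x;\alpha]$. We plan to reduce modulo the ideal ${\rm Nil}(R)[x;\alpha]$. Compatibility ensures $\alpha({\rm Nil}(R))\subseteq{\rm Nil}(R)$, that this set is an ideal, and that $\bar R=R/{\rm Nil}(R)$ is reduced with an induced compatible, hence injective, $\bar\alpha$. Over a reduced $\bar\alpha$-compatible ring, leading-coefficient arguments show that $\bar R[x;\bar\alpha]$ is reduced, has units only in $U(\bar R)$, and has zero Jacobson radical; the latter follows since $x$ acts like a regular element, so $1+xf$ is never a unit unless $f=0$. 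Lifting back through the nil ideal ${\rm Nil}(R)[x;\alpha]$ gives all three facts. We would cite these from the literature rather than reprove them.
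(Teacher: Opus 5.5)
Your proof is correct and is essentially the paper's argument: both rest on the cited description of $U(R[x;\alpha])$ (unit constant term, nilpotent higher coefficients) and on identifying the relevant radical-type sets of $R[x;\alpha]$ with ${\rm Nil}(R)[x;\alpha]$. You only reroute the cycle as (1)$\Rightarrow$(3)$\Rightarrow$(2)$\Leftrightarrow$(4)$\Rightarrow$(1), using that $J(R[x;\alpha])$ is nil instead of citing \cite[Corollary 3.14]{DDE} for (2)$\Leftrightarrow$(3). One small slip in your aside: the ideal with quotient $R$ is $R[x;\alpha]x$, not $xR[x;\alpha]$, which is not a left ideal when $\alpha$ is not onto; this is harmless, since in (4)$\Rightarrow$(1) the element $a\mp 1$ is already a nilpotent constant.
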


\begin{proof} (2) $\Leftrightarrow$ (3). It follows from \cite[Corollary 3.14]{DDE}. 	

(1) $\Rightarrow$ (2). Let us assume $f=\sum_{i=0}^{n}a_ix^i \in U(R[x; \alpha])$. Since $R$ is a $2$-primal ring, \cite[Corollary 2.14]{Chenpr} works to get $a_0 \in U(R)$ and $a_i \in {\rm Nil}(R)$ for all $1 \le i \le n$. However, as $R$ is a $WUU$ ring, we find that \( a_0 = \pm1 + q \), where \( q \in {\rm Nil}(R) \). Now, we subsequently have $$f = (\pm1 + q) + \sum_{i=1}^{n}a_ix^i \in$$
$$\pm1 + {\rm Nil}(R) + {\rm Nil}(R)[x; \alpha]x=\pm1+{\rm Nil}(R)[x; \alpha]=\pm1+\sqrt{J(R[x; \alpha])},$$
as requested.	

(2) $\Rightarrow$ (1). Let us assume $u \in U(R)$. Then, we can write that $$u \in \pm1 + \sqrt{J(R[x; \alpha])} = \pm1 + {\rm Nil}(R)[x; \alpha].$$ Therefore, $u \pm 1 \in {\rm Nil}(R)$. 	

(1) $\Rightarrow$ (4). Arguing as in (1) $\Rightarrow$ (2), we are done.

(4) $\Rightarrow$ (1). As $R[x; \alpha]/(x)\cong R$ and all units of $R[x; \alpha]/(x)$ lifted to units $R[x; \alpha]$, the implication is sustained.
\end{proof}

What we can extract as a valuable consequence is the following.

\begin{corollary} Let $R$ be a $2$-primal ring. Then, the following four conditions are equivalent:

(1) $R$ is a $WUU$ ring.

(2) $R[x]$ is a $W\sqrt{J}U$ ring.

(3) $R[x]$ is a $WJU$ ring.

(4) $R[x]$ is a $WUU$ ring.
\end{corollary}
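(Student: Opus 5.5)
This corollary follows by specializing the preceding theorem to $\alpha=\operatorname{id}_R$, so that $R[x;\alpha]=R[x]$. The only hypothesis to check is that a $2$-primal ring $R$ is $\operatorname{id}_R$-compatible. This is automatic: the defining condition reads $ab=0\Longleftrightarrow a\,\operatorname{id}_R(b)=0$, which is the tautology $ab=0\Longleftrightarrow ab=0$. Hence every ring, and in particular every $2$-primal ring, is $\operatorname{id}_R$-compatible, and the theorem applies verbatim.

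Substituting $\alpha=\operatorname{id}_R$ into items (1)--(4) of the theorem gives exactly items (1)--(4) of the corollary. The chain of equivalences
\[
R \text{ is } WUU \iff R[x] \text{ is } W\sqrt{J}U \iff R[x] \text{ is } WJU \iff R[x] \text{ is } WUU
\]
is then inherited directly from the theorem.

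There is no real obstacle here. The only point worth a sentence is that the ingredients used in the theorem's proof specialize correctly to $\alpha=\operatorname{id}_R$. These are \cite[Corollary 2.14]{Chenpr}, giving the description of units of $R[x;\alpha]$ over a $2$-primal $\alpha$-compatible ring, and the identification $\sqrt{J(R[x;\alpha])}={\rm Nil}(R)[x;\alpha]$. Since both are stated for arbitrary compatible endomorphisms, the identity is a special case and nothing further needs to be verified.
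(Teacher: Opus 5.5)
Your proposal is correct and matches the paper, which states this corollary without separate proof as the case $\alpha=\operatorname{id}_R$ of the preceding theorem on $R[x;\alpha]$. Your check that $\operatorname{id}_R$-compatibility reduces to the tautology $ab=0\Leftrightarrow ab=0$ is the only hypothesis that needs verifying, since $R[x;\operatorname{id}_R]=R[x]$.
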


We are now concentrating on the following.

\begin{proposition}\label{3.9} A ring $R[[x; \alpha]]$ is $W\sqrt{J}U$ if and only if so is $R$.
\end{proposition}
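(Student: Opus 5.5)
The plan is to reduce everything to Proposition~\ref{1}, using the ideal $I=(x)=R[[x;\alpha]]x$ of $S:=R[[x;\alpha]]$.

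\emph{Step 1: $I\subseteq J(S)$.} Elements of $I$ are power series with zero constant term. For $f\in I$ and any $g\in S$, the product $gf$ again has zero constant term, so $1-gf$ has constant term $1$. A skew power series whose constant term is a unit of $R$ is a unit of $S$: its inverse is built coefficient by coefficient, solving the recursive equations using the rule $xr=\alpha(r)x$. Hence $1-gf\in U(S)$ for all $g\in S$, which gives $I\subseteq J(S)$. For this step I would cite the standard fact that $J(R[[x;\alpha]])=J(R)+R[[x;\alpha]]x$. This fact uses only that $\alpha$ is an endomorphism; no compatibility assumption is needed.

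\emph{Step 2: the quotient.} The map $S\to R$ sending a series to its constant term is a surjective ring homomorphism. Multiplicativity holds because the constant term of $fg$ is $f_0g_0$, and the twisting by $\alpha$ only affects terms of positive degree. Its kernel is exactly $I$, so $S/I\cong R$.

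\emph{Step 3: conclude.} Proposition~\ref{1}, applied to the ideal $I\subseteq J(S)$, shows that $S$ is $W\sqrt{J}U$ if and only if $S/I\cong R$ is $W\sqrt{J}U$. Being $W\sqrt{J}U$ is clearly invariant under ring isomorphism, which finishes the argument.

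\emph{Main obstacle.} The only real work is Step 1, specifically verifying that series with unit constant term are invertible in the skew setting. Let $f=\sum f_ix^i$ with $f_0\in U(R)$. I would first produce a right inverse $g=\sum g_ix^i$ by solving
\[
\sum_{i+k=n} f_i\,\alpha^i(g_k)=\delta_{n0}
\]
for $g_n$ inductively, where $g_n$ appears in the term $f_0g_n$ and can be solved for since $f_0$ is a unit. Symmetrically, I would produce a left inverse $h$, solving for $h_n$ from the term $h_n\alpha^n(f_0)$; this requires $\alpha^n(f_0)$ to be a unit, which holds because ring endomorphisms preserve units. Then $h=hfg=g$, so the left and right inverses coincide.

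An alternative that avoids this computation is to write $f=f_0(1-t)$ with $t\in I$, since $f_0^{-1}f\in 1+I$. Then $1-t$ is inverted by the geometric series $\sum_k t^k$, which converges $x$-adically because $t^k\in Sx^k$.
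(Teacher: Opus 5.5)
Your proof is correct and follows the same route as the paper. Both take $I=R[[x;\alpha]]x$, note that $I\subseteq J(R[[x;\alpha]])=J(R)+I$ and $R[[x;\alpha]]/I\cong R$, and then apply Proposition~\ref{1}; your explicit check that a skew series with unit constant term is invertible (using that $\alpha$ preserves units) simply proves the standard fact the paper cites without proof.
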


\begin{proof} Putting $I := R[[x; \alpha]]x$, we then check that $I$ is an ideal of $R[[x; \alpha]]$. Note also
that $J(R[[x; \alpha]])=J(R)+I$, so that $I\subseteq J(R[[x; \alpha]])$. And since $R[[x; \alpha]]/I\cong R$, the result follows with Proposition~\ref{1} in hand.
\end{proof}

We are now concerning with the triangular matrices.

\begin{proposition}\label{uper} Let $R$ be a ring. Then, the following four claims are equivalent:

(1) $R$ is a $\sqrt{J}U$ ring.

(2) ${\rm T}_{n}(R)$ is a $\sqrt{J}U$ ring for all $n \in \mathbb{N}$.

(3) ${\rm T}_n(R)$ is a $\sqrt{J}U$ ring for some $n \in \mathbb{N}$.

(4) ${\rm T}_n(R)$ is a $W\sqrt{J}U$ ring for some $n \geq 2$.
\end{proposition}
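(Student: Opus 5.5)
The plan is to reduce everything to the quotient ${\rm T}_n(R)/J({\rm T}_n(R))$. Let $I$ be the ideal of ${\rm T}_n(R)$ consisting of matrices with diagonal entries in $J(R)$ and arbitrary entries above the diagonal. It is standard that $J({\rm T}_n(R))=I$ and that ${\rm T}_n(R)/I\cong (R/J(R))^n$. By Proposition~\ref{1} we get: ${\rm T}_n(R)$ is $W\sqrt{J}U$ if and only if $(R/J(R))^n$ is $W\sqrt{J}U$. The same reasoning for $\sqrt{J}U$ rings holds, since the analogue of Proposition~\ref{1} for $\sqrt{J}U$ rings has the same proof, using Lemma~\ref{1.2}(5),(8). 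Hence ${\rm T}_n(R)$ is $\sqrt{J}U$ if and only if $(R/J(R))^n$ is $\sqrt{J}U$. The same argument with $n=1$ shows that $R$ is $\sqrt{J}U$ if and only if $R/J(R)$ is $\sqrt{J}U$.

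With this reduction, the chain of implications is as follows.

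\textbf{(1)$\Rightarrow$(2).} If $R$ is $\sqrt{J}U$, then so is $\bar R:=R/J(R)$, hence so is $\bar R^n$ by \cite[Proposition 2.4]{SU} (or Lemma~\ref{1.2}(6) directly). Lifting back gives that ${\rm T}_n(R)$ is $\sqrt{J}U$ for every $n$.

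\textbf{(2)$\Rightarrow$(3).} This is trivial.

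\textbf{(3)$\Rightarrow$(4).} If $n\ge 2$, this is immediate, since a $\sqrt{J}U$ ring is $W\sqrt{J}U$. If only $n=1$ is given, then $R$ is $\sqrt{J}U$, and (1)$\Rightarrow$(2) yields ${\rm T}_2(R)$ $\sqrt{J}U$, hence $W\sqrt{J}U$.

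\textbf{(4)$\Rightarrow$(1).} If ${\rm T}_n(R)$ is $W\sqrt{J}U$ with $n\ge2$, then $\bar R^n$ is $W\sqrt{J}U$. Corollary~\ref{10} forces $\bar R$ to be $\sqrt{J}U$, and therefore $R$ is $\sqrt{J}U$.

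The main point requiring care is the description $J({\rm T}_n(R))=I$ together with ${\rm T}_n(R)/I\cong \bar R^n$. The proof is short: $I$ is an ideal, and for $A\in I$ the matrix $1+A$ is upper triangular with unit diagonal $1+a_{ii}$, hence invertible, so $I\subseteq J$. Conversely, the quotient map onto $\bar R^n$ sends $J$ into $J(\bar R^n)=0$, giving $J\subseteq I$.

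The other point to check is that the $\sqrt{J}U$ version of Proposition~\ref{1} holds. Given $u+I=1+(j+I)$, we get $u=1+j+i'$ with $i'\in I\subseteq J$, and $j+i'\in\sqrt{J}$ by Lemma~\ref{1.2}(8). The other direction uses Lemma~\ref{1.2}(5). Alternatively, one can pass through the characterization analogous to Theorem~\ref{m}.
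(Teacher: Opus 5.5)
Your proof is correct and takes the same route as the paper for the key implication (4)$\Rightarrow$(1): factor ${\rm T}_n(R)$ by an ideal contained in $J({\rm T}_n(R))$ to get an $n$-fold direct power, then apply Proposition~\ref{1} and Corollary~\ref{10}. The paper differs only in two respects: it uses the smaller ideal of zero-diagonal matrices, so the quotient is $R^n$ itself and your detour through $R/J(R)$ and the $\sqrt{J}U$ analogue of Proposition~\ref{1} is unnecessary; and it cites \cite[Proposition 3.19(3)]{DDE} for (1)$\Leftrightarrow$(2)$\Leftrightarrow$(3), which you instead prove by the same reduction.
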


\begin{proof} Points (1), (2), and (3) are all equivalent via \cite[Proposition 3.19(3)]{DDE}.

(1) $\Rightarrow$ (4). This is quite trivial, so we omit details.

(4) $\Rightarrow$ (1). Setting $I:=\{ (a_{ij})\in {\rm T}_{n}(R)| a_{ii}=0\}$, we obtain that $I\subseteq J({\rm T}_{n}(R))$ with ${\rm T}_{n}(R)/{I}\cong R^{n}$. Therefore, Proposition~\ref{1} and Corollary~\ref{10} are applicable in combination to get the desired result.
\end{proof}

Let $R$ be a ring and $M$ a bi-module over $R$. The {\it trivial extension} of $R$ and $M$ is stated as \[{\rm T}(R,M) = \{(r, m) : r \in R \text{ and } m \in M\} \] with addition defined component-wise and multiplication
defined by the equality \[ (r, m)(s, n) = (rs, rn + ms). \] One knows that ${\rm T}(R, M)$ is isomorphic to the subring \[ \left\{ \begin{pmatrix} r & m \\ 0 & r \end{pmatrix} : r \in R \text{ and } m \in M \right\} \]
consisting of all the formal $2 \times 2$ matrix ring $\begin{pmatrix} R & M \\ 0 & R \end{pmatrix}$. 

We also notice to be known is that the set of units of ${\rm T}(R, M)$ is precisely \[ U({\rm T}(R, M)) = {\rm T}(U(R), M). \]

We close this section with the following assertion.

\begin{proposition}\label{3.4} The trivial extension ${\rm T}(R, M)$ is a $W\sqrt{J}U$ ring if and only if $R$ is a $W\sqrt{J}U$ ring. 
\end{proposition}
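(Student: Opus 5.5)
The plan is to reduce the statement to Proposition~\ref{1}. Put $I := {\rm T}(0, M) = \{(0,m) : m \in M\}$. First I will check that $I$ is a two-sided ideal of ${\rm T}(R,M)$ with $I^2 = 0$. Indeed, $(0,m)(0,n) = (0, 0\cdot n + m\cdot 0) = (0,0)$, and multiplying $(0,m)$ on either side by $(r,n)$ gives $(0, rm)$ or $(0, mr)$, both of which lie in $I$.

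Next I will show $I \subseteq J({\rm T}(R,M))$. The quickest route is that every nilpotent ideal lies in the Jacobson radical. Alternatively, one can argue directly: for any $(r,n)$ and $(0,m)$, the element $1 - (r,n)(0,m) = (1, -rm)$ is a unit. This follows from the quoted description $U({\rm T}(R,M)) = {\rm T}(U(R),M)$, or explicitly because $(1,-rm)(1,rm) = (1,0)$.

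The projection $(r,m) \mapsto r$ is then a surjective ring homomorphism ${\rm T}(R,M) \to R$ with kernel exactly $I$. Hence ${\rm T}(R,M)/I \cong R$. Applying Proposition~\ref{1} to the ideal $I \subseteq J({\rm T}(R,M))$, we conclude that ${\rm T}(R,M)$ is $W\sqrt{J}U$ if and only if $R$ is. This gives both implications at once.

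I expect no real obstacle here. The only point needing care is confirming that $I$ is an ideal contained in the radical, and this is immediate from $I^2 = 0$. If one prefers to avoid Proposition~\ref{1}, the same argument can be done by hand. A unit $(u,m)$ has $u \in U(R)$, so $u = \pm 1 + j$ with $j \in \sqrt{J(R)}$. Then $(u,m) = \pm(1,0) + (j,m)$, and $(j,m) = (j,0) + (0,m)$ lies in $\sqrt{J(R)}$-type elements plus $J$, which is handled by Lemma~\ref{1.2}(5) and (8).
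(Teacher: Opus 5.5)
Your proposal is correct and is exactly the paper's argument: take $I={\rm T}(0,M)$, observe that $I\subseteq J({\rm T}(R,M))$ and ${\rm T}(R,M)/I\cong R$, then apply Proposition~\ref{1}. You also justify the radical containment via $I^2=0$, which the paper leaves implicit, and the optional by-hand variant in your last paragraph (only sketched, and covering just one direction) is not needed.
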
 

\begin{proof} Set $A:={\rm T}(R, M)$ and consider the ideal $I:={\rm T}((0), M)$ of $A$. Then, one discovers that $I\subseteq J(A)$ such that $A/I \cong R$. So, our claim follows at once from Proposition \ref{1}, as expected.
\end{proof}

\section{Group Rings}

Let $R$ be a ring and $G$ a group. As usual, the notation $RG$ stands for the group ring as being a module over
$R$ with elements of $G$ as a basis. The homomorphism $\varepsilon \colon RG \to R$, defined by $\sum_{g\in G} r_g g \mapsto \sum_{g\in G} r_g$, is standardly known as the \textit{augmentation homomorphism} of $RG$. Its kernel, $\ker(\varepsilon)$, referred to as the \textit{augmentation ideal} of $RG$, is denoted by $\Delta(RG)$, and equals to $$\Delta(RG)=\left\{ \sum_{g \in G}a_g(1 - g) \colon 1 \neq g \in G, a_g \in G \right\}.$$

Traditionally, a group $G$ is called \textit{locally finite}, provided that any subgroup generated by a finite
subset of $G$ is itself finite. When $p$ is a prime, a $p$-group means that every its element has order equal to a power of $p$. If all non-identity elements of a group have order exactly $p$, the group is said to have
\textit{exponent} $p$. The notation $C_n$ represents the classical cyclic group having only $n$ elements.

\medskip

Before stating and proving the main result of this section, we need to establish four technical machineries.

\begin{lemma}\label{l1} If $RG$ is a $W\sqrt{J}U$ ring, then $R$ is also a $W\sqrt{J}U$ ring. 
\end{lemma}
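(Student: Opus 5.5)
The plan is to view $R$ as a homomorphic image of $RG$ through the augmentation map, and to check that the $W\sqrt{J}U$ property passes to this particular image. The argument does not need the ideal to sit inside $J$ as in Proposition~\ref{1}, because the augmentation map splits.

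\textbf{Step 1 (lifting units).} The augmentation $\varepsilon\colon RG\to R$ is a surjective ring homomorphism. It has the ring-theoretic section $\iota\colon R\to RG$, $r\mapsto r\cdot 1_G$, with $\varepsilon\circ\iota=\mathrm{id}_R$. Consequently, for any $u\in U(R)$, the element $\iota(u)=u\cdot 1_G$ is a unit of $RG$, with inverse $u^{-1}\cdot 1_G$. Thus every unit of $R$ lifts to a unit of $RG$.

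\textbf{Step 2 (decomposing and pushing back down).} Since $RG$ is $W\sqrt{J}U$, we can write $u\cdot 1_G=\pm 1+j$ for some $j\in\sqrt{J(RG)}$. Applying $\varepsilon$ gives
\[
u=\varepsilon(u\cdot 1_G)=\pm 1+\varepsilon(j).
\]
By Lemma~\ref{2.1} applied to the surjection $\varepsilon$, we have $\varepsilon(j)\in\sqrt{J(R)}$. Hence $U(R)\subseteq \pm1+\sqrt{J(R)}$.

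\textbf{Step 3 (reverse inclusion).} Every $x\in\sqrt{J(R)}$ satisfies $\pm1+x\in U(R)$: for the plus sign write $1+x=1-(-x)$ and use $-x\in\sqrt{J(R)}$, then apply Lemma~\ref{1.2}(3); for the minus sign write $-1+x=-(1-x)$ and apply Lemma~\ref{1.2}(3) directly. The definition is understood as requiring only that every unit has such a form, so this inclusion is automatic in any case.

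An alternative route is to note that $R\cong R\cdot 1_G$ is a unital subring of $RG$ and invoke Proposition~\ref{sub}. One would then have to check that the subring is rationally closed, i.e.\ that $U(RG)\cap R=U(R)$. That check is precisely what the augmentation map provides: if $r\in R$ is invertible in $RG$ with $rv=vr=1$, then $\varepsilon(v)$ inverts $r$ in $R$.

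The only potentially delicate point is Lemma~\ref{2.1}, which rests on $f(J(R))\subseteq J(S)$ for surjective $f$. That fact is already established in the paper, so I expect no real obstacle.
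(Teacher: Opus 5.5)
Your proposal is correct and follows the paper's proof: you write $u\cdot 1_G=\pm1+j$, push it down through the augmentation $\varepsilon$, and invoke Lemma~\ref{2.1} to get $\varepsilon(j)\in\sqrt{J(R)}$. You also make explicit a step the paper glosses over (its proof starts from an arbitrary $a\in R$): a unit of $R$ stays a unit of $RG$ via the section $r\mapsto r\cdot 1_G$, which is exactly why this particular epimorphic image inherits the property.
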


\begin{proof} Assume $a \in R$. Since $RG$ is a $W\sqrt{J}U$ ring, there is $j \in \sqrt{J(RG)}$ such that
$a = \pm 1 + j$, whence $a = \pm 1 + \varepsilon(j)$. But, Lemma \ref{2.1} reaches us that $\varepsilon(j) \in \sqrt{J(R)}$, as required.
\end{proof}

It is worthy of noticing that $R$ is an epimorphic image of $RG$ as shown above, so that the conclusion could be derived directly too.

\begin{lemma} If $RG$ is a $W\sqrt{J}U$ ring, then $G$ is a torsion group. 
\end{lemma}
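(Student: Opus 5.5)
The plan is to argue by contradiction. Suppose $g\in G$ has infinite order. Then $g\in U(RG)$, so by the $W\sqrt{J}U$ hypothesis we may write $g=\varepsilon_0+j$ with $\varepsilon_0\in\{1,-1\}$ and $j\in\sqrt{J(RG)}$. Thus $(g-\varepsilon_0)^n\in J(RG)$ for some $n\ge 1$.

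The key step is to move this information into the subring $S:=R\langle g\rangle\cong R[t,t^{-1}]$, the Laurent polynomial ring, where $g\mapsto t$.

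\textbf{Step 1: $S$ is rationally closed in $RG$.} Let $\pi\colon RG\to S$ keep only the coefficients supported on $\langle g\rangle$. This map is left (and right) $S$-linear, that is, $\pi(ab)=a\pi(b)$ for $a\in S$. This holds because $a$ multiplies cosets of $\langle g\rangle$ among themselves and fixes the trivial coset. Now let $u\in S\cap U(RG)$ with $uv=vu=1$. Then
\[1=\pi(uv)=u\pi(v) \quad\text{and}\quad 1=\pi(vu)=\pi(v)u,\]
so $u\in U(S)$.

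\textbf{Step 2: $J(RG)\cap S\subseteq J(S)$.} Take $y\in J(RG)\cap S$ and any $z\in S$. Then $1-zy\in U(RG)\cap S=U(S)$ by Step 1, so $y\in J(S)$. Consequently
\[(t-\varepsilon_0)^n\in J\bigl(R[t,t^{-1}]\bigr).\]

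\textbf{Step 3: contradiction.} Here I would invoke Amitsur's theorem that $J(R[t,t^{-1}])=N[t,t^{-1}]$ for some nil ideal $N$ of $R$; this is the same argument as for $R[t]$, since $R[t,t^{-1}]$ is a localization, i.e.\ a graded ring by $\mathbb{Z}$. The polynomial $(t-\varepsilon_0)^n$ has leading coefficient $1$, so $1\in N$, which is impossible because $R\neq 0$. Hence every element of $G$ has finite order.

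The main obstacle is Step 3, namely justifying the structure of $J$ of the Laurent ring. If I prefer to avoid quoting Amitsur, I would try a direct argument instead. The aim is to show that $1-(t-\varepsilon_0)^n t^{m}$ cannot be a unit in $R[t,t^{-1}]$ for a suitable $m$, for example $m=1$. I would compare the degree spans of a product of Laurent polynomials whose extreme coefficients are $1$ or $\pm1$, hence not zero divisors. The idea is that a product $fh=1$ forces the extreme terms to cancel, and this cancellation fails when the top and bottom coefficients are units. Either route yields the contradiction, and hence $G$ is torsion.
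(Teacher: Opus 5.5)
Your proof is correct. Its fallback version of Step~3 is essentially the paper's argument, but the main line is organized differently.

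The paper stays inside $RG$ throughout. From $1\mp g\in\sqrt{J(RG)}$ it uses Lemma~\ref{1.2}(1),(3) to conclude that a quadratic such as $1+g-g^2$ is a unit. It then writes the inverse as $\sum_{i=n}^m a_ig^i$ and compares extreme degrees. That step tacitly assumes the inverse is supported on $\langle g\rangle$, and the paper never justifies this. Your Step~1, the $R\langle g\rangle$-bilinear projection $\pi$ showing that $R\langle g\rangle$ is rationally closed in $RG$, is exactly what is needed there. On this point your write-up is more complete than the paper's.

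Your primary Step~3 instead imports the structure theorem $J(R[t,t^{-1}])=N[t,t^{-1}]$ with $N$ nil. This is true, but ``the same argument as for $R[t]$'' is not a proof. The standard justification is Bergman's theorem that the Jacobson radical of a $\mathbb{Z}$-graded ring is a graded ideal. Combine it with the easy computation that $1-at\in U(R[t,t^{-1}])$ forces $a$ to be nilpotent.

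Your fallback is lighter and fully self-contained. Since $(g-\varepsilon_0)^n\in J(RG)$, the element $1-g(g-\varepsilon_0)^n$ is a unit of $RG$. By Step~1 it is then a unit of $R\langle g\rangle\cong R[t,t^{-1}]$, so Step~2 is not even needed on this route. Its extreme coefficients are $1$ (degree $0$) and $-1$ (degree $n+1$), so the degree-span comparison rules out any inverse.

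The Amitsur route gives a stronger conclusion, namely that all coefficients of elements of $J(R\langle g\rangle)$ are nil, at the cost of a deep citation. The degree argument gives exactly what the lemma needs, using nothing beyond Step~1.
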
 

\begin{proof} We prove the claim by contradiction. To that target, assume that there is $g \in G$ which is {\it not} of finite order. Thus, by assumption, we must have either $1-g \in \sqrt{J(RG)}$ or $1+g \in \sqrt{J(RG)}$. So, by Lemma \ref{1.2}(1), we have that either $g(1-g) \in \sqrt{J(RG)}$ or $g(1+g) \in \sqrt{J(RG)}$. Again Lemma \ref{1.2}(3) teaches us that either $1+g-g^2 \in U(RG)$ or $1+g+g^2 \in U(RG)$. 

We only consider the case $1 + g - g^2 \in U(RG)$. Consequently, there are integers $n < m$ and elements $a_i$ with $a_n \neq 0 \neq a_m$ such that \[ (1 + g - g^2)\sum_{i=n}^{m} a_ig^i = 1. \] After expanding the sum, this obviously leads us to a contradiction. 

We process by analogy with $1+g+g^2 \in U(RG)$ to reach a contradiction. Therefore, all elements of $G$ must be of finite order, which means that $G$ is a torsion group, as formulated.
\end{proof}

\begin{lemma}\label{2gr} Suppose that $RG$ is a $W\sqrt{J}U$ ring with $2 \in J(R)$. Then, $G$ is a $2$-group.
\end{lemma}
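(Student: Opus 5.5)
Since $2\in J(R)$, the plan is first to upgrade the hypothesis to the stronger statement that $RG$ is a $\sqrt{J}U$ ring. Then, for a would‑be element of odd order, we manufacture a nonzero idempotent lying in $J(RG)$, which is impossible.

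\emph{Step 1: $RG$ is $\sqrt{J}U$.} Because $2\in J(R)$, the element $3=1+2$ is a unit of $R$, and hence a unit of $RG$. Proposition~\ref{memeber}(1), applied to the $W\sqrt{J}U$ ring $RG$, then yields $2\in J(RG)$. Proposition~\ref{mino} now shows that $RG$ is a $\sqrt{J}U$ ring. In particular, $1-g\in\sqrt{J(RG)}$ for every $g\in G$.

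\emph{Step 2: reduction to an element of odd order.} By the preceding lemma, $G$ is torsion. Suppose $G$ is not a $2$-group. Then some $g\in G$ has order $2^k m$ with $m>1$ odd, and $h:=g^{2^k}$ has odd order $m>1$. By Step 1 there is $N\ge 1$ with $(1-h)^N\in J(RG)$. Note that $m=1+2t$ is a unit in $R$ because $2\in J(R)$. Also, $h$ commutes with every element of $R$ inside $RG$, so the subring $R[h]$ is generated over $R$ by an element that commutes with all coefficients.

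\emph{Step 3: an idempotent in $J(RG)$.} Put $e:=m^{-1}(1+h+\cdots+h^{m-1})$. This is an idempotent satisfying $(1-h)e=0$. Let $f:=1-e$. In $\mathbb{Z}[1/m][x]/(x^m-1)$ the polynomials $x-1$ and $1+x+\cdots+x^{m-1}$ are comaximal, since their resultant is $\pm m$, which is a unit. Hence there is an integer polynomial $q$ (with coefficients in $\mathbb{Z}[1/m]$) such that $(1-x)q(x)\equiv 1-e(x)$ modulo $x^m-1$. Evaluating at $h$, where $m^{-1}$ makes sense in $R$, gives $(1-h)q(h)=f$. Since $f$ is an idempotent commuting with $h$ and with $q(h)$, we get
\[f=f^N=(1-h)^Nq(h)^N\in J(RG),\]
because $J(RG)$ is an ideal. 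An idempotent in the Jacobson radical is zero, so $f=0$, that is, $1+h+\cdots+h^{m-1}=m\cdot 1$ in $RG$. Comparing the coefficients of the basis element $h\neq 1$ gives $1=0$ in $R$, a contradiction for a nonzero ring. Therefore $G$ is a $2$-group.

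The main obstacle is Step 3. Lemma~\ref{1.2}(4) cannot be applied directly, because $1-h$ is not central in $RG$. The idempotent construction circumvents this: the B\'ezout identity must be checked carefully over $\mathbb{Z}[1/m]$ and then transported into $RG$, using only that $m$ is invertible in $R$ and that $h$ commutes with $R$.
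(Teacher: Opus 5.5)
Your proof is correct. Its first step is exactly the paper's: $3\in U(R)\subseteq U(RG)$, so Proposition~\ref{memeber} gives $2\in J(RG)$, and Proposition~\ref{mino} then makes $RG$ a $\sqrt{J}U$ ring.

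From there the routes differ. The paper simply quotes an external result, \cite[Theorem 4.4]{uq}, which says $G$ is a $2$-group whenever $RG$ is $\sqrt{J}U$. You prove this part directly:
\begin{itemize}
\item torsion comes from the preceding lemma;
\item you take $h$ of odd order $m>1$, so $m\in U(R)$ because $2\in J(R)$;
\item the identity $1-e=(1-h)q(h)$, with $e=m^{-1}(1+h+\cdots+h^{m-1})$, shows that the idempotent $1-e$ commutes with $1-h$ and is a multiple of $(1-h)^N\in J(RG)$, hence is $0$.
\end{itemize}
Your route gives a self-contained, elementary argument that does not depend on the cited theorem or its hypotheses. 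It also handles the non-centrality of $1-h$ in $RG$ directly, since $J(RG)$ is an ideal. The paper, by contrast, in Lemma~\ref{3gr} has to pass to the subring $R\langle h\rangle$ and invoke Connell's result. The paper's route is of course much shorter.

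Two small simplifications are available. First, no resultant is needed. With $\Phi(x)=1+x+\cdots+x^{m-1}$ one has $\Phi(x)-m=\sum_{i=1}^{m-1}(x^i-1)=(x-1)\sum_{i=1}^{m-1}(1+x+\cdots+x^{i-1})$. Hence $q(x)=m^{-1}\sum_{i=1}^{m-1}(1+x+\cdots+x^{i-1})$ works explicitly, as an exact polynomial identity. Second, once $e=1$, you get $1-h=(1-h)e=0$ immediately, with no coefficient comparison. Like the paper, you tacitly need $R\neq 0$.
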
 

\begin{proof} Since $2 \in J(R)$, we obtain $3 \in U(R) \subseteq U(RG)$; therefore, using 
Proposition~\ref{memeber}, $2 \in J(RG)$. Hence, Proposition~\ref{mino} gives that $RG$ is a $\sqrt{J}U$ ring, and so \cite[Theorem 4.4]{uq} shows that $G$ is a $2$-group, as asserted. 
\end{proof}

\begin{lemma}\label{3gr} Suppose that $RG$ is a $W\sqrt{J}U$ ring with $3 \in J(R)$ and that $G$ is a $p$-group. Then $G$ is a $3$-group. 
\end{lemma}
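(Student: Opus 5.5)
The plan is to show that if $G$ is non-trivial and $p\neq 3$, then $RG$ contains a non-trivial idempotent, which Proposition~\ref{memeber} forbids. If $G$ is trivial there is nothing to prove, so assume $G\neq 1$ and $p\neq 3$. Since $G$ is a non-trivial $p$-group, it contains an element $g$ of order exactly $p$.

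\textbf{Step 1: $3\in J(RG)$.} Since $3\in J(R)$, we have $2=-1+3\in U(R)\subseteq U(RG)$, where $R$ embeds in $RG$ as a unital subring. Now $RG$ is $W\sqrt{J}U$ and $2\in U(RG)$, so Proposition~\ref{memeber}(2), applied to $RG$, gives $3\in J(RG)$. The ``in particular'' part of Proposition~\ref{memeber} then yields ${\rm Id}(RG)=\{0,1\}$. It remains to build an idempotent of $RG$ different from $0$ and $1$.

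\textbf{Step 2: $p$ is invertible in $R$.} If $p=2$, this holds by Step 1. If $p$ is an odd prime different from $3$, write $p=3k\pm1$. Then $p\cdot 1_R\in \pm1+J(R)\subseteq U(R)$, because $3\in J(R)$ and $J(R)$ is an ideal.

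\textbf{Step 3: the idempotent.} Put
\[
e:=p^{-1}\sum_{i=0}^{p-1}g^i\in RG .
\]
Since $g\sum_{i=0}^{p-1} g^i=\sum_{i=0}^{p-1} g^i$, we get $\bigl(\sum_{i=0}^{p-1} g^i\bigr)^2=p\sum_{i=0}^{p-1} g^i$, and hence $e^2=e$.

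We check that $e\neq 0,1$ by comparing coefficients in the $R$-basis $G$ of $RG$, using that $1,g,\dots,g^{p-1}$ are distinct group elements. The coefficient of $g$ in $e$ is $p^{-1}\neq 0$, because $R\neq 0$; so $e\neq 0$. On the other hand, the coefficient of $g$ in $1$ is $0$, since $g\neq 1$; so $e\neq 1$.

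This contradicts ${\rm Id}(RG)=\{0,1\}$. Hence $p=3$, and $G$ is a $3$-group.

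The only delicate point is Step 1: one needs $3\in J(RG)$ and not merely $3\in J(R)$. Lemma~\ref{1.2}(4) cannot be used directly for this transfer, so the plan routes it through the unit $2$ and Proposition~\ref{memeber}(2) applied to $RG$ itself. Everything after that is a routine computation.
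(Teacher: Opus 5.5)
Your proof is correct, and for most of the lemma it takes a different route from the paper's, one that is simpler. Step 1 coincides with the paper: both obtain $3\in J(RG)$ from $2=-1+3\in U(RG)$ and Proposition~\ref{memeber}(2) applied to $RG$.

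After that the paper splits into $p=2$ and $p$ odd. For $p=2$ it first argues that every element of $G$ has order $2$. It then uses the idempotent $\frac12(1+g)$ together with ${\rm Id}(RG)=\{0,1\}$, and finishes with $RC_2\cong R\times R$ and Corollary~\ref{pro1}. This is your idempotent specialised to $p=2$, but reached indirectly: the preliminary reduction is unnecessary, since a nontrivial $2$-group already contains an element of order $2$. Your coefficient comparison also replaces the paper's loose passage ``$g=\pm1$, hence $G\cong C_2$''.

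For odd $p$ the paper does not use idempotents at all. Writing $p^k=2n+1$, it decomposes the unit $g^{2n}$ and multiplies by $g$ to get $1\pm g\in\sqrt{J(RG)}$. Then one of two things happens:
\begin{itemize}
\item $1-g$ is a unit annihilating the norm element $1+g+\dots+g^{p^k-1}$; or
\item passing to $R\langle g\rangle$, where $1-g$ is central, Lemma~\ref{1.2}(4) gives $\Delta(R\langle g\rangle)\subseteq J(R\langle g\rangle)$, and Connell's theorem forces $\langle g\rangle$ to be a $3$-group.
\end{itemize}

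Your key observation is that every prime $q\neq 3$ satisfies $q\equiv\pm1\pmod 3$, hence $q\in\pm1+J(R)\subseteq U(R)$. This makes the averaging idempotent $q^{-1}\sum_{i=0}^{q-1}g^i$ work uniformly. You need no case split, no restriction to $R\langle g\rangle$, and no external citation.

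Your argument also gives more than the lemma states. You only use an element of prime order $q\neq 3$. Combined with the preceding lemma that $G$ is torsion, it shows $G$ is a $3$-group for an arbitrary group $G$, so the $p$-group hypothesis is superfluous.

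What the paper's augmentation-ideal route offers in return is that it does not depend on ${\rm Id}(RG)=\{0,1\}$. That fact is special to $3\in J$. In the companion setting of Lemma~\ref{2gr}, $\sqrt{J}U$ rings may have nontrivial idempotents, so your trick would not apply directly there.
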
 

\begin{proof} Analogically to the previous lemma, it can be shown that $3 \in J(RG)$. 

We now examine the following two basic cases:

\medskip

\noindent{\bf Case 1:} If $p = 2$, we first prove that, for every $g \in G$, $g^2 = 1$. We do that in a way of contradiction. To this purpose, let $n > 1$ be the smallest integer such that $g^{2^n} = 1$. Since $g \in U(RG)$, we have either $1 - g^{2^{n-1}} \in \sqrt{J(RG)}$ or $1 + g^{2^{n-1}} \in \sqrt{J(RG)}$. Because of the relationship $3 \in J(RG)$, it follows that either $1 + 2g^{2^{n-1}} \in \sqrt{J(RG)}$ or $- 2g^{2^{n-1}} \in \sqrt{J(RG)}$. So,
Lemma~\ref{1.2}(1) refers to either \[ 1 + g^{2^{n-1}} = -g^{2^{n-1}} + \sqrt{J(RG)} \in U(RG), \] or \[ 1 - g^{2^{n-1}} = g^{2^{n-1}} + \sqrt{J(RG)} \in U(RG). \] 

\noindent But, on the other hand, we have $(1 + g^{2^{n-1}})(1 - g^{2^{n-1}}) = 0$. This, however, suggests
either $g^{2^{n-1}} = 1$ or $g^{2^{n-1}} = -1$. Since $n$ was chosen minimal with $g^{2^n}=1$, the case
$g^{2^{n-1}} = 1$ cannot occur. Thus, if $g^{2^{n-1}} = -1$, then we detect $$(1 + g^{2^{n-2}})(1 - g^{2^{n-2}}) = 1 - (-1) = 2 \in U(RG),$$ hence $1 + g^{2^{n-2}} \in U(RG)$ and $1 - g^{2^{n-2}} \in U(RG)$, which is the wanted contradiction, because $RG$ is a $W\sqrt{J}U$ ring. Therefore, we must have $g^2 = 1$ for every $g \in G$.

Now, we manage to show that this is also impossible. In fact, if $g^2 = 1$, then $e = \frac12 (1 + g)$ is an idempotent in $RG$. However, Proposition~\ref{memeber} proposes that $RG$ has no non-trivial idempotents, so either $e = 0$ or $e = 1$, which gives either $g = 1$ or $g = -1$. But, since $g \in G$ was arbitrary, we infer $G \cong C_2$, and consequently $RG \cong R \times R$. This, however, contradicts Corollary~\ref{pro1}, because $R$ is not a $\sqrt{J}U$ ring, as $2 \not\in J(R)$, thus substantiating our inspection.

\medskip

\noindent{\bf Case 2:} If $p \neq 2$, we show that $p = 3$. To that end, let $g^{p^k} = 1$. Since $p^k$ is odd, we can write $p^k = 2n + 1$ for some $n \in \mathbb{N}$. Hence, either $1 + g^{2n} \in \sqrt{J(RG)}$ or $1 - g^{2n} \in \sqrt{J(RG)}$.

If, firstly, $1 + g^{2n} \in \sqrt{J(RG)}$, then in conjunction with Lemma~\ref{1.2}(1) multiplying by $g$ gives $1 + g \in \sqrt{J(RG)}$. Moreover, as $3 \in J(RG)$, we deduce $1 - 3 + g \in \sqrt{J(RG)}$ yielding $1 - g \in -1 + \sqrt{J(RG)} \subseteq U(RG)$. Furthermore, since $g^{p^k} = 1$, we extract $(1 - g)(1 + g + \dots + g^{\,p^k-1}) = 0$; but, because $1 - g \in U(RG)$, we get $f := 1 + g + \dots + g^{\,p^k-1} = 0$, so $0 = \varepsilon(f) = p^k$ implying $p \in J(R)$. But, $3 \in J(R)$ amounts to $p = 3$.

If now $1 - g^{2n} \in \sqrt{J(RG)}$, then in virtue of Lemma~\ref{1.2}(1) multiplying by $g$ gives $1 - g \in \sqrt{J(RG)} \cap R\langle g \rangle \subseteq \sqrt{J(R\langle g \rangle)}$. Since $1 - g \in C(R\langle g \rangle) \cap \sqrt{J(R\langle g \rangle)}$, Lemma~\ref{1.2}(4) insures $1 - g \in J(R\langle g \rangle)$, whence $\Delta(R\langle g \rangle) \subseteq J(R\langle g \rangle)$. Consequently, \cite[Proposition 15(i)]{con} tells us that $\langle g \rangle$ is a $q$-group with $q \in J(R)$; but, because, $3 \in J(R)$, we finally obtain $q = 3$, as desired.
\end{proof}

We are now able to prove our final main result.

\begin{theorem} Let $R$ be a ring of positive characteristic, and let $G$ be a locally finite $p$-group. Then,
$RG$ is a $W\sqrt{J}U$ ring if and only if one of the following holds: 

\begin{enumerate} 
\item $R$ is a $\sqrt{J}U$ ring and $G$ is a $2$-group. 
\item $R$ is a $W\sqrt{J}U$ ring with $3 \in J(R)$, and $G$ is a $3$-group. 
\item $R \cong R_1 \times R_2$, where $R_1$ is a $\sqrt{J}U$ ring, $R_2$ is a $W\sqrt{J}U$ ring, and $G$ is the trivial group.
\end{enumerate} 
\end{theorem}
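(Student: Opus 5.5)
Both directions will be handled through the characteristic decomposition of Lemma~\ref{char} and the case analysis of Lemmas~\ref{2gr} and \ref{3gr}.

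\emph{Necessity.} Suppose $RG$ is $W\sqrt{J}U$. By Lemma~\ref{l1}, $R$ is $W\sqrt{J}U$. Since $\operatorname{char}(R)>0$, Lemma~\ref{char} gives $\operatorname{char}(R)=2^\alpha3^\beta$. The Chinese Remainder Theorem then yields $R\cong R_1\times R_2$ with $2^\alpha R_1=0$ and $3^\beta R_2=0$, and correspondingly $RG\cong R_1G\times R_2G$.

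By Proposition~\ref{productt}, both factors are $W\sqrt{J}U$ and at least one of them is $\sqrt{J}U$. In $R_1$ the element $2$ is nilpotent, so $2\in J(R_1)$; Proposition~\ref{mino} then makes $R_1$ a $\sqrt{J}U$ ring. In $R_2$ the element $3$ is nilpotent, so $3\in J(R_2)$.
\begin{itemize}
\item If $R_2=0$, then $R=R_1$ is $\sqrt{J}U$ with $2\in J(R)$, and Lemma~\ref{2gr} shows that $G$ is a $2$-group. This is case (1).
\item If $R_1=0$, then $3\in J(R)$, and Lemma~\ref{3gr} shows that $G$ is a $3$-group. This is case (2).
\item If both factors are nonzero, suppose $G\neq 1$. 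Applying Lemma~\ref{2gr} to $R_1G$ gives $p=2$, while applying Lemma~\ref{3gr} to $R_2G$ gives $p=3$. This is a contradiction, so $G$ is trivial. This is case (3).
\end{itemize}

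\emph{Sufficiency.} Case (3) is immediate from Proposition~\ref{product}, since $RG\cong R$.

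In case (1), $R$ is $\sqrt{J}U$, so $2\in J(R)$, and $G$ is a locally finite $2$-group. I would invoke the known characterization of $\sqrt{J}U$ group rings from \cite{uq} or \cite{DDE}: $RG$ is $\sqrt{J}U$ whenever $R$ is $\sqrt{J}U$ and $G$ is a locally finite $2$-group. The reason is that $\Delta(RG)\subseteq J(RG)$ by Connell's result \cite{con}, as $2\in J(R)$ and $G$ is a locally finite $2$-group. Since $RG/\Delta(RG)\cong R$, Proposition~\ref{1} then transfers the property.

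Case (2) is argued the same way. With $3\in J(R)$ and $G$ a locally finite $3$-group, \cite[Proposition 15]{con} (or its standard extension to locally finite $p$-groups with $p\in J(R)$) gives $\Delta(RG)\subseteq J(RG)$. Because $RG/\Delta(RG)\cong R$ is $W\sqrt{J}U$, Proposition~\ref{1} shows that $RG$ is $W\sqrt{J}U$.

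The main obstacle is this inclusion $\Delta(RG)\subseteq J(RG)$ for infinite locally finite $G$ and noncommutative $R$. Connell's criterion is stated for finite groups. I would reduce to finitely generated, hence finite, subgroups $H$. For each such $H$, every $x\in\Delta(RH)$ lies in $J(RH)$: the Jacobson radical of $RH$ contains $J(R)H$, and modulo $J(R)H$ the elements $1-h$ are nilpotent in characteristic $p$ because $(1-h)^{p^k}=1-h^{p^k}$ there. For a general element $x\in\Delta(RG)$ and any $r\in RG$, both lie in some $RH$, so $1-rx$ is invertible in $RH\subseteq RG$. This gives $\Delta(RG)\subseteq J(RG)$.
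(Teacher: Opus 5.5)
Your proof is correct and follows the paper's argument essentially step by step. Necessity uses Lemma~\ref{char}, the Chinese Remainder splitting $RG\cong R_1G\times R_2G$ and Lemmas~\ref{2gr}, \ref{3gr}; sufficiency uses $\Delta(RG)\subseteq J(RG)$ with Proposition~\ref{1}, where the paper simply cites \cite[Lemma 2]{zhou} for the inclusion. The one weak step is your self-contained proof of that inclusion. Nilpotency of each $1-h$ modulo $J(R)H$ does not by itself put $\Delta(RH)$ inside $J(RH)$ when $H$ is non-abelian, because these elements are not central. Instead use that $\Delta(\bar{R}H)$ is a nilpotent ideal whenever $H$ is a finite $p$-group and $p\bar{R}=0$ (e.g.\ by induction on $|H|$ through a central element of order $p$), or just cite Zhou's lemma.
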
 

\begin{proof} Assume foremost that $RG$ is a $W\sqrt{J}U$ ring. Employing Lemma~\ref{char}, we deduce $\operatorname{Char}(R)=2^{\alpha}3^{\beta}$, where $\alpha$ and $\beta$ are positive integers.

If $\alpha = 0$, then $3 \in J(R)$. Hence, combining Lemmas~\ref{l1} and \ref{3gr}, $R$ is a $W\sqrt{J}U$ ring and $G$ is a $3$-group.

If $\beta = 0$, then $2 \in J(R)$. Hence, combining Lemmas~\ref{l1} and \ref{2gr}, $R$ is a $W\sqrt{J}U$ ring and $G$ is a $2$-group. Moreover, Proposition~\ref{mino} guarantees that $R$ is, actually, a $\sqrt{J}U$ ring.

If, however, both $\alpha, \beta \neq 0$, then by the Chinese Remainder Theorem we obtain $R \cong R_1 \times R_2$, where $2^{\alpha}=0$ in $R_1$ and $3^{\beta}=0$ in $R_2$. Therefore, a combination of Corollary~\ref{pro1} and Proposition~\ref{mino}, $R_1$ is a $\sqrt{J}U$ ring and $R_2$ is a $W\sqrt{J}U$ ring. 

On the other hand, since $RG$ is a $W\sqrt{J}U$ ring, the standard isomorphism $(R_1 \times R_2)G \cong R_1G \times R_2G$ shows that the direct product $R_1G \times R_2G$ is also a $W\sqrt{J}U$ ring. Hence, Corollary~\ref{pro1} ensures that $R_1G$ and $R_2G$ are $W\sqrt{J}U$ rings. Now, Lemmas~\ref{3gr} and \ref{2gr} assure that $G$ to be the trivial group, as pursued.

Conversely, assume that (1) holds. Since $R$ is a $W\sqrt{J}U$ ring, Proposition~\ref{mino} gives $2 \in J(R)$.
Also, because $G$ is a locally finite $2$-group, we have $\Delta(RG) \subseteq J(RG)$ in view of \cite[Lemma 2]{zhou}. But, as $R/\Delta(RG) \cong R$, Proposition~\ref{1} terminates that $RG$ must be a $W\sqrt{J}U$ ring.

The proof of point (2) is similar to that of point (1), so it is removed.

The final point (3) is quite obvious, so without treating. This concludes our arguments after all.
\end{proof}

\medskip
\medskip

\section*{Data availability}
No data was used for the research described in the article.

\medskip
\medskip

\section*{Declarations}
The authors declare no any conflict of interests while writing and preparing this manuscript.

\end{document}